\documentclass{amsart}
\usepackage[a4paper,margin=1in]{geometry}
\usepackage{amssymb}
\usepackage[hyphens]{url}
\usepackage{amsmath}
\usepackage{amsthm}
\usepackage{mathrsfs}
\usepackage[title]{appendix}
\usepackage[utf8]{inputenc}
\usepackage[english]{babel}
\usepackage{cite}
\usepackage{aliascnt}
\usepackage{hyperref}
\usepackage{cleveref}
\usepackage[section]{placeins}
\RequirePackage{textgreek}
\usepackage{enumitem}
\usepackage{bm}
\usepackage{nicefrac}
\usepackage[all,cmtip]{xy}
\usepackage{caption}
\usepackage{float}
\usepackage{mathtools}
\usepackage{xcolor}
\usepackage{xspace}
\usepackage{stmaryrd}
\usepackage{dsfont}

\hypersetup{
    colorlinks=true,
    linkcolor=blue,
    citecolor=cyan,
    urlcolor=red
}

\hypersetup{linktoc=all}
\numberwithin{equation}{section}

\newtheorem{thm}{Theorem}[section]

\newaliascnt{lem}{thm}
\newtheorem{lem}[lem]{Lemma}
\aliascntresetthe{lem}

\newaliascnt{prop}{thm}
\newtheorem{prop}[prop]{Proposition}
\aliascntresetthe{prop}

\newaliascnt{cor}{thm}
\newtheorem{cor}[cor]{Corollary}
\aliascntresetthe{cor}

\newaliascnt{obs}{thm}

\aliascntresetthe{obs}

\newaliascnt{defn}{thm}
\newtheorem{defn}[defn]{Definition}
\aliascntresetthe{defn}

\newaliascnt{assump}{thm}

\aliascntresetthe{assump}

\newaliascnt{con}{thm}

\aliascntresetthe{con}
\newaliascnt{que}{thm}

\aliascntresetthe{que}
\newaliascnt{prob}{thm}

\aliascntresetthe{prob}
\newaliascnt{fct}{thm}

\aliascntresetthe{fct}

\newaliascnt{claim}{thm}

\aliascntresetthe{claim}
\theoremstyle{remark}
\newaliascnt{rem}{thm}
\newtheorem{rem}[rem]{Remark}
\aliascntresetthe{rem}
\newaliascnt{exm}{thm}

\aliascntresetthe{exm}
\newaliascnt{exs}{thm}

\aliascntresetthe{exs}

\crefname{thm}{theorem}{theorems}
\Crefname{thm}{Theorem}{Theorems}
\crefname{lem}{lemma}{lemmas}
\Crefname{lem}{Lemma}{Lemmas}
\crefname{prop}{proposition}{propositions}
\Crefname{prop}{Proposition}{Propositions}
\crefname{cor}{corollary}{corollaries}
\Crefname{cor}{Corollary}{Corollaries}
\crefname{obs}{observation}{observations}
\Crefname{obs}{Observation}{Observations}
\crefname{defn}{definition}{definitions}
\Crefname{defn}{Definition}{Definitions}
\crefname{que}{question}{questions}
\Crefname{que}{Question}{Questions}
\crefname{prob}{problem}{problems}
\Crefname{prob}{Problem}{Problems}
\crefname{fct}{fact}{facts}
\Crefname{fct}{Fact}{Facts}
\crefname{claim}{claim}{claims}
\Crefname{claim}{Claim}{Claims}
\crefname{rem}{remark}{remarks}
\Crefname{rem}{Remark}{Remarks}
\crefname{exm}{example}{examples}
\Crefname{exm}{Example}{Examples}
\crefname{exs}{examples}{examples}
\Crefname{exs}{Examples}{Examples}

\makeatletter
\def\@settitle{%
  \begin{center}%
    \baselineskip18\p@\relax
    \normalfont\Large\scshape
    \@title
  \end{center}%
}
\makeatother

\author{Nachi Avraham-Re'em}
\address[Nachi Avraham-Re'em]{Department of Mathematics, Technion, Haifa, Israel}
\email{nachi.avraham@gmail.com}

\author{Zemer Kosloff}
\address[Zemer Kosloff]{School of Mathematics, University of Bristol, Fry Building,
Woodland Road, Bristol BS8 1UG, United Kingdom\newline
Einstein Institute of Mathematics, The Hebrew University of Jerusalem,
Edmond J. Safra Campus, Givat Ram, Jerusalem 9190401, Israel}
\email{zemer.kosloff@bristol.ac.uk}
\email{zemer.kosloff@mail.huji.ac.il}

\thanks{Z. Kosloff was supported by the ISF (grant No. 1180/22). N. Avraham-Re'em was supported by the ISF (grant No. 3423/24).}

\title{Asymmetry of \(\ell^{2}\)-cohomology via skewed F{\o}lner geometry}

\subjclass[2020]{Primary 20F18, 20F65, 20F69; Secondary 37A20, 20J06, 37A40, 43A07}

\keywords{\(\ell^{2}\)-cohomology, Dirichlet spaces, asymmetric cocycles, amenable groups, FC-groups, F{\o}lner geometry, nonsingular Bernoulli shifts}

\begin{document}

\begin{abstract}
We study the two \(\ell^{2}\)-Dirichlet structures on a countable group \(G\) arising from the left and right regular actions on \(\mathbb{R}^{G}\). Although the two regular representations are unitarily equivalent, their \(\ell^{2}\)-Dirichlet subspaces of \(\mathbb{R}^{G}\) need not coincide. Our main result gives a complete classification of this asymmetry for countable amenable groups:
\[\mathcal{D}_{2}\left(G,\lambda\right)=\mathcal{D}_{2}\left(G,\rho\right)\quad\Longleftrightarrow\quad G \text{ is an FC-group}.\]

The proof is based on a skewed F{\o}lner-geometric mechanism, called a \emph{left scheme}, combining summability of left boundaries with displacement under a right translation. We develop this mechanism generally, and demonstrate it concretely in the Heisenberg group and amenable wreath products over \(\mathbb{Z}\).

We also show that this mechanism has a dynamical counterpart in the theory of nonsingular Bernoulli shifts: every countable amenable group that is not an FC-group admits Bernoulli schemes whose left shift is nonsingular, conservative and weakly mixing, whereas the right shift by some element is singular.
\end{abstract}

\maketitle

\tableofcontents

\section{Introduction and results}

A theorem of Cheeger--Gromov~\cite{cheeger1986} asserts that for countable amenable groups, the reduced \(\ell^{2}\)-cohomology vanishes; see \cite[Cor.~5.13]{luck1998}, \cite{kyed2015}. Thus the phenomena studied in this paper are invisible from the reduced point of view, and in particular from \(\ell^{2}\)-Betti numbers. The central question we deal with is whether the two canonical Dirichlet structures induced by left and right translation agree as subspaces of \(\mathbb{R}^{G}\). We shall explain why in countable nonamenable groups this has a striking answer, and then give a full classification in the amenable case: for a countable amenable group \(G\), the left and right \(\ell^{2}\)-Dirichlet spaces differ precisely when \(G\) is not an FC-group.

\smallskip

A central object in first \(\ell^{2}\)-cohomology of groups is the \(\ell^{2}\)-Dirichlet space. For a countable group \(G\), this is the space of real-valued functions defining \(\ell^{2}\left(G\right)\)-valued \(1\)-cocycles for the regular representation. Beyond its intrinsic interest in geometric group theory and representation theory (see e.g.~\cite[\S8]{gromov1993asymptotic}, \cite{bekka1997group,martin2007first,gournay2018mixing}), this space lies at the heart of the ergodic theory of nonsingular Bernoulli shifts. Indeed, by Kakutani's classical \(\ell^{2}\)-type criterion for equivalence of product measures \cite{kakutani1948equivalence}, the nonsingularity of Bernoulli actions is governed precisely by whether the square-root marginal probabilities belong to the appropriate \(\ell^{2}\)-Dirichlet space; see~\cite{vaes2018bernoulli,bjorklund2018bernoulli,bjorklund2021ergodicity}.

\smallskip

We now define these two Dirichlet structures precisely. Fix a countably infinite discrete group \(G\). Denote by \(\lambda\) and \(\rho\) the left and right translation actions of \(G\) on the vector space \(\mathbb{R}^{G}\), defined by
\[\lambda\left(g\right)\xi\left(h\right)=\xi\left(g^{-1}h\right)\quad\text{and}\quad \rho\left(g\right)\xi\left(h\right)=\xi\left(hg\right).\]
Define the {\bf left \(\ell^{2}\)-Dirichlet space} of \(G\) by
\[\mathcal{D}_{2}\left(G,\lambda\right):=\left\{\xi\in\mathbb{R}^{G}:\forall g\in G,\ \lambda\left(g\right)\xi-\xi\in\ell^{2}\left(G\right)\right\},\]
and the {\bf right \(\ell^{2}\)-Dirichlet space} of \(G\) by
\[\mathcal{D}_{2}\left(G,\rho\right):=\left\{\xi\in\mathbb{R}^{G}:\forall g\in G,\ \rho\left(g\right)\xi-\xi\in\ell^{2}\left(G\right)\right\}.\]
Inside \(\mathcal{D}_{2}\left(G,\lambda\right)\cap \mathcal{D}_{2}\left(G,\rho\right)\) one always finds the trivial subspace
\[\ell^{2}\left(G\right)\oplus\mathbb{R}\mathbf{1}.\]
We will use the following slightly abused but convenient terminology:
\begin{itemize}
    \item the elements of \(\mathcal{D}_{2}\left(G,\lambda\right)\) are called {\bf left \(\ell^{2}\)-cocycles};
    \item the elements of \(\mathcal{D}_{2}\left(G,\rho\right)\) are called {\bf right \(\ell^{2}\)-cocycles};
    \item the elements of \(\mathcal{D}_{2}\left(G,\lambda\right)\cap \mathcal{D}_{2}\left(G,\rho\right)\) are called {\bf symmetric \(\ell^{2}\)-cocycles};
    \item the elements of \(\mathcal{D}_{2}\left(G,\lambda\right)\triangle \mathcal{D}_{2}\left(G,\rho\right)\) are called {\bf asymmetric \(\ell^{2}\)-cocycles};
    \item the elements of \(\ell^{2}\left(G\right)\oplus\mathbb{R}\mathbf{1}\) are called {\bf \(\ell^{2}\)-coboundaries}, which are always symmetric \(\ell^{2}\)-cocycles.
\end{itemize}

\smallskip

Although the left and right regular representations are unitarily equivalent as abstract unitary representations \(G\to\mathrm{U}\left(\ell^{2}\left(G\right)\right)\) (via variable inversion), their \(\ell^{2}\)-Dirichlet spaces need not agree as subspaces of \(\mathbb{R}^{G}\). This discrepancy is not accidental: the unreduced \(\ell^{2}\)-cohomology can retain directional structure.

\smallskip

There are two principal obstructions to \(\ell^{2}\)-asymmetry. The first is algebraic. Abelian groups, and more generally FC-groups, admit no asymmetric \(\ell^{2}\)-cocycles (\cref{prop:FC}). Thus, for FC-groups,
\[\mathcal{D}_{2}\left(G,\lambda\right)=\mathcal{D}_{2}\left(G,\rho\right).\]

\smallskip

The second obstruction is geometric or potential-theoretic: for nonamenable groups, the space of \(\ell^{2}\)-cocycles has a particularly restrictive form~\cite{bekka1997group}. This is manifested in the following theorem. One of its implications is due to Stefaan Vaes, who kindly allowed us to include his proof; see Appendix~\ref{app:vaes}.

\begin{thm}\label{mthm1}
For every countably infinite group \(G\),
\[\mathcal{D}_{2}\left(G,\lambda\right)\cap\mathcal{D}_{2}\left(G,\rho\right)=\ell^{2}\left(G\right)\oplus\mathbb{R}\mathbf{1}\iff G\text{ is nonamenable.}\]
\end{thm}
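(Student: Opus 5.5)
We prove the two implications separately.

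\emph{Nonamenable $\Rightarrow$ only coboundaries.} We use the known structure of left $\ell^2$-cocycles for nonamenable groups. By \cite{bekka1997group}, if $G$ is nonamenable then the left regular representation has spectral gap. Hence every $1$-cocycle $b(g)=\lambda(g)\xi-\xi$ into $\ell^2(G)$ is a coboundary up to a constant. Concretely, the map $\xi\mapsto(\lambda(s)\xi-\xi)_{s\in S}$ has closed range on $\ell^2$. A Bekka--Valette type argument then shows: if $\xi\in\mathcal{D}_2(G,\lambda)$ with $G$ nonamenable, there exist $\eta\in\ell^2(G)$ and a function $c$ that is constant on each end (right coset at infinity) such that $\xi=\eta+c$. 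I will derive this directly rather than rely on the end structure. Take $\eta\in\ell^2(G)$ with $\lambda(g)\xi-\xi=\lambda(g)\eta-\eta$ for all $g$. Then $f=\xi-\eta$ is left-invariant, so $f(g^{-1}h)=f(h)$ for all $g,h$, hence $f$ is constant. This handles the case $H^1(G,\lambda)=0$.

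In general, though, $\bar H^1=\,$reduced cohomology may be nonzero; $H^1$ vanishing is equivalent to the cocycle being inner, not reduced-trivial. Nonamenability gives only that $H^1=\bar H^1$. So we must use the right-invariance condition. Given symmetric $\xi$, write the cocycle $b_\lambda$. For each fixed $g$, the function $\rho(g)\xi-\xi\in\ell^2$, and $\rho(g)$ commutes with $\lambda$. Hence $\rho(g)\xi-\xi$ is an $\ell^2$ element $\zeta_g$ with $\lambda(h)\zeta_g-\zeta_g=\rho(g)b(h)-b(h)$. Averaging over $g$ in a right random walk, i.e.\ using the spectral gap of $\rho$ with a symmetric generating measure $\mu$, set $P=\sum\mu(g)\rho(g)$, so that $\|P\|_{\ell^2}<1$. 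Then $\xi-P\xi\in\ell^2$, and the Neumann series $\eta=\sum_n P^n(\xi-P\xi)$ converges in $\ell^2$. Moreover $\xi-\eta$ is $P$-harmonic: $\xi-\eta-P(\xi-\eta)=0$. A $\mu$-harmonic function $f=\xi-\eta$ also lies in $\mathcal{D}_2(G,\lambda)$ and $\mathcal{D}_2(G,\rho)$. For harmonic $f$, $\rho(g)f-f$ is in $\ell^2$ and $P$-harmonic, so $\rho(g)f-f=P^n(\rho(g)f-f)\to0$. Thus $f$ is right-invariant, hence constant. This direction looks clean; the main check is that $P$-harmonicity of $\ell^2$ functions forces zero, which follows from $\|P\|<1$. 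Vaes' argument in the appendix is presumably of this form.

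\emph{Amenable $\Rightarrow$ nontrivial symmetric cocycle.} This is the main obstacle. We construct $\xi\notin\ell^2\oplus\mathbb{R}\mathbf{1}$ with both difference families in $\ell^2$. Use a F\o lner sequence of finite sets $F_n$ that is simultaneously left and right F\o lner. Such two-sided F\o lner sets exist for countable amenable groups, since the action of $G\times G$ on $G$ is amenable. We pass to a rapidly chosen subsequence with $|gF_n\triangle F_n|/|F_n|$ and $|F_ng\triangle F_n|/|F_n|$ at most $4^{-n}$ for $g$ in the $n$-th ball of an exhaustion. We then set
\[\xi=\sum_n a_n\,|F_n|^{-1/2}\mathbf{1}_{F_n}\]
with weights $a_n$ chosen so that $\sum a_n^2=\infty$ but $\sum_n a_n 2^{-n}<\infty$; for example $a_n\equiv1$. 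Then for each $g$ we have
\[\|\lambda(g)\xi-\xi\|_2\le\sum_n|F_n|^{-1/2}|gF_n\triangle F_n|^{1/2}<\infty,\]
and the same bound holds for $\rho$. Finitely many early terms are harmless. To avoid overlap, choose $F_n$ disjoint, which is possible by translating, with two-sided translates preserving two-sided F\o lnerness via conjugation, or by choosing the sets far apart. With disjoint supports, $\xi\notin\ell^2$ since $\|\xi\|_2^2=\sum a_n^2=\infty$. Also $\xi-c\notin\ell^2$ for every $c\neq0$, because $\xi\to0$ off a sparse set if $|F_n|^{-1/2}\to0$, and $G$ is infinite, so $|F_n|\to\infty$. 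The delicate step is ensuring simultaneous left/right F\o lner sets that can be made disjoint; we argue this via amenability of the $G\times G$ action and by shifting $F_n$ by a far-away element $x_n$ on the left. The set $x_nF_n$ keeps left F\o lnerness, and its right F\o lnerness equals that of $F_n$.
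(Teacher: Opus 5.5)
Your overall architecture matches the paper's: an averaging operator with spectral gap for the nonamenable direction, and disjoint two-sided F{\o}lner sets for the amenable one. However, each direction contains a step that fails as written.

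\emph{Nonamenable direction.} The claim that $\rho(g)f-f$ is $P$-harmonic is false. With $P=\sum_h\mu(h)\rho(h)$ one has $\rho(g)P=\sum_h\mu(h)\rho(gh)$ but $P\rho(g)=\sum_h\mu(h)\rho(hg)$, and these differ unless $\mu$ is invariant under conjugation by $g$.

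Your final step uses only the right hypothesis. It would therefore prove $\mathcal{D}_2(G,\rho)=\ell^2(G)\oplus\mathbb{R}\mathbf{1}$ for every nonamenable $G$, which is false. In $F_2=\langle a,b\rangle$, let $\xi$ be the indicator of the reduced words beginning with $a$. Then $\rho(g)\xi-\xi$ is finitely supported for every $g$, yet $\xi$ is not an $\ell^2$-coboundary.

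The repair is to use the opposite side, which is the whole point of Vaes' argument (\cref{thm:nonamenable}):
\begin{itemize}
\item $\lambda(g)f-f=(\lambda(g)\xi-\xi)-(\lambda(g)\eta-\eta)\in\ell^2(G)$ by the left hypothesis;
\item $P(\lambda(g)f-f)=\lambda(g)f-f$ because $\lambda$ and $\rho$ commute;
\item so $\|P\|<1$ gives $\lambda(g)f=f$ for all $g$, and $f$ is constant.
\end{itemize}

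Two smaller points in this direction:
\begin{itemize}
\item $\mu$ should be finitely supported rather than generating. For non-finitely generated $G$, a generating $\mu$ has infinite support, and then neither $P\xi$ nor $\xi-P\xi\in\ell^2(G)$ is justified for $\xi\in\mathbb{R}^G$. A lazy measure $\frac12\delta_{e_G}+\frac{1}{2|F|}\sum_{g\in F}\delta_g$, with $F$ from \cref{lem:noname} transported to $\rho$ via $h\mapsto h^{-1}$, has $\|P\|<1$.
\item The opening paragraph's claim that spectral gap makes every cocycle a coboundary up to constants is false (free groups are a counterexample). Spectral gap only makes $B^1$ closed, so that paragraph should be dropped.
\end{itemize}

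\emph{Amenable direction.} The disjointification step fails. Left translation preserves right F{\o}lner ratios, $|x_nF_ng\triangle x_nF_n|=|F_ng\triangle F_n|$, but not left ones:
\[
|gx_nF_n\triangle x_nF_n|=|(x_n^{-1}gx_n)F_n\triangle F_n|.
\]
The F{\o}lner property of $F_n$ does not control this once $g$ has infinite conjugacy class and $x_n$ is far away. This is exactly the skewing mechanism the paper uses to build left schemes in \cref{prop:amenleft}, so it can genuinely destroy two-sidedness. Two-sided translates and conjugates have the same defect.

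There are two ways to fix this:
\begin{itemize}
\item Trim rather than translate, as in \cref{lem:balanced}. Set $F_n:=S_m\setminus(F_1\cup\dots\cup F_{n-1})$, with $S_m$ two-sided F{\o}lner and so large that the removed part is a negligible fraction; this costs little in either boundary.
\item Drop disjointness entirely. Pass to a subsequence with $\sum_n|F_n|^{-1/2}<\infty$. Then $\xi$ is well defined and lies in $c_0(G)$, your triangle-inequality bound gives both Dirichlet conditions, and $\xi^2\geq\sum_n|F_n|^{-1}\mathbf{1}_{F_n}$ gives $\|\xi\|_2=\infty$.
\end{itemize}

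With these repairs your proof becomes essentially the paper's.
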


Among nonamenable groups, some exhibit additional cohomological obstructions. For instance, for groups with Kazhdan's property (T), such as \(\mathrm{SL}_{3}\left(\mathbb{Z}\right)\) (which is far from being an FC-group), all \(\ell^{2}\)-cocycles are \(\ell^{2}\)-coboundaries by the Delorme--Guichardet theorem~\cite[Prop.~2.2.10]{bekka2008kazhdan}, and therefore they admit no asymmetric \(\ell^{2}\)-cocycle. Other nonamenable groups, such as free groups, have positive first \(\ell^{2}\)-Betti number, and therefore they admit asymmetric \(\ell^{2}\)-cocycles that are not \(\ell^{2}\)-coboundaries. In light of \cref{mthm1}, this work is concerned with the following question.
\begin{quote}
\emph{Which countable amenable groups admit asymmetric \(\ell^{2}\)-cocycles?}
\end{quote}

This question was raised in a MathOverflow discussion~\cite{MOthread}, where it was noted that amenable groups appear to be the interesting case; this is now substantiated by \cref{mthm1}. In the same discussion, M. Kapovich observed that the infinite dihedral group \(D_{\infty}\) admits an asymmetric \(\ell^{2}\)-cocycle. We give a complete classification that settles this question:

\begin{thm}\label{mthm2}
For every countable amenable group \(G\),
\[\mathcal{D}_{2}\left(G,\lambda\right)=\mathcal{D}_{2}\left(G,\rho\right)\iff G\text{ is an FC-group}.\]
\end{thm}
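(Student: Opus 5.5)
The direction ``FC-group \(\Rightarrow\) equality'' is \cref{prop:FC}, so the work is the converse. We assume \(G\) is countable amenable and not FC, and we construct \(\xi\in\mathcal{D}_{2}(G,\lambda)\setminus\mathcal{D}_{2}(G,\rho)\); the opposite inclusion can then be broken using the inversion \(\check\xi(h)=\xi(h^{-1})\), which interchanges the two spaces. Since \(G\) is not FC, we fix \(s\in G\) whose conjugacy class \(\{gsg^{-1}\}\) is infinite. The idea is to take \(\xi\) to be a weighted sum of indicators of left Følner sets,
\[\xi=\sum_{n}a_{n}\mathbf{1}_{F_{n}},\]
and to arrange two things. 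First, the left boundaries must be summable: \(\sum_n a_n\,|gF_n\triangle F_n|^{1/2}<\infty\) for every \(g\). This gives \(\lambda(g)\xi-\xi\in\ell^{2}\), because \(\lambda(g)\mathbf 1_{F}-\mathbf 1_{F}=\mathbf 1_{gF}-\mathbf 1_F\) has \(\ell^2\)-norm \(|gF\triangle F|^{1/2}\). Second, the right translate \(F_n s\) must be displaced from \(F_n\), say \(|F_ns\triangle F_n|\ge c|F_n|\), and the pieces must be essentially disjoint or at least not cancel. Then \(\|\rho(s^{-1})\xi-\xi\|_2^2\gtrsim\sum_n a_n^2|F_n|\), and we choose \(a_n\) so that this sum diverges. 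This is the ``left scheme'' of the abstract.

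The key steps are the following.

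(1) \emph{Displacement.} Given a finite \(K\subset G\) and \(\varepsilon>0\), we want a finite \(F\) that is \((K,\varepsilon)\)-left-invariant and satisfies \(|Fs\cap F|\le\frac12|F|\). We start from any left Følner set \(F_0\) and replace it by \(F=F_0t\) for a suitable \(t\). Right translation preserves left invariance, and \(F_0ts\cap F_0t=(F_0tst^{-1}\cap F_0)t\). Because \(tst^{-1}\) ranges over an infinite set, we can pick \(t\) so that \(F_0(tst^{-1})\) is mostly disjoint from \(F_0\). The counting argument is as follows. For a fixed finite \(F_0\), an element \(u\) satisfies \(|F_0u\cap F_0|>\frac12|F_0|\) only if \(u\in F_0^{-1}F_0\), which is a finite set. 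So it suffices to choose \(t\) with \(tst^{-1}\notin F_0^{-1}F_0\), and this is possible because the conjugacy class of \(s\) is infinite. In fact we get \(F_0u\cap F_0=\emptyset\), i.e.\ full displacement.

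(2) \emph{Disjointness.} The next step is to build a sequence \(F_n\), with \((K_n,\varepsilon_n)\)-invariance along an exhaustion \(K_n\uparrow G\), that is pairwise disjoint and also has the sets \(F_ns\) disjoint from all \(F_m\). We do this by further right-translating each \(F_n\) far away, which is possible since \(G\) is infinite and only finitely many translates are forbidden. Then
\[\rho(s^{-1})\xi-\xi=\sum_n a_n\bigl(\mathbf 1_{F_ns}-\mathbf 1_{F_n}\bigr)\]
is a sum with disjoint supports, and its squared norm is \(2\sum a_n^2|F_n|\).

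(3) \emph{Choice of weights.} We take \(a_n=|F_n|^{-1/2}\), so that \(\sum a_n^2|F_n|=\sum 1=\infty\). For the left side we need \(\sum_n |F_n|^{-1/2}|gF_n\triangle F_n|^{1/2}<\infty\) for each fixed \(g\). This holds if \(\varepsilon_n=4^{-n}\) and \(g\in K_n\) for all large \(n\), since then the terms are at most \(2^{-n}\). The finitely many early terms are finite anyway, and the triangle inequality in \(\ell^2\) handles the infinite sum, whose partial sums converge.

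The main obstacle is making steps (1) and (2) simultaneously compatible with left invariance. The resolution is that right translations commute with the left action, so they preserve left Følner quality exactly, while the infinite conjugacy class of \(s\) lets them destroy right \(s\)-invariance. I would verify the finiteness claim \(\{u:F_0u\cap F_0\neq\emptyset\}=F_0^{-1}F_0\) carefully, since the whole construction hinges on it, and then check convergence of \(\xi\) pointwise. Pointwise convergence is trivial here because the supports are disjoint.
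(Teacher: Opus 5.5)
\emph{Verdict: there is a gap in step (2), and it is fixed by exactly the argument of \cref{lem:leftsch}.}

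Your strategy is the paper's: right-translate left F{\o}lner sets by some $t$ with $tst^{-1}\notin F^{-1}F$, make the pieces disjoint, and weight them by $|F_n|^{-1/2}$. However, step (2) does not work as written.

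Right-translating an already displaced set $F$ by a further element $h$ does not preserve displacement. Indeed $(Fh)s\cap Fh=(F\,hsh^{-1}\cap F)h$, so $Fh$ is displaced only if $hsh^{-1}\notin F^{-1}F$. This is the condition you solved in step (1) for $t$, and it must now be solved again for $h$. The set of translates violating it, $\{h: hsh^{-1}\in F^{-1}F\}$, is a finite union of left cosets of $C_G(s)$. It is therefore infinite as soon as $C_G(s)$ is infinite. So the claim that ``only finitely many translates are forbidden'' is false, and moving $F_n$ far away can destroy displacement.

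For instance, take $H_3(\mathbb{Z})$ with $s=x$ and the boxes $E_n$ of \cref{sct:Heis}. Every $h$ in the infinite coset $y^{-b_n}\langle x,z\rangle$ gives $E_nh\,x\cap E_nh\neq\emptyset$, because the $y$-coordinates of $E_nh$ return to $[0,B_n)$. Yet such $h$ can be arbitrarily far from $e$. Restricting to translates in $C_G(s)$, which do preserve displacement, does not help in general either: for a reflection $s\in D_\infty$ one has $C_G(s)=\{e,s\}$.

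The missing point is to choose a single translate that meets all constraints at once, and to count correctly. By orbit--stabilizer, $s^G$ infinite means $[G:C_G(s)]=\infty$. Hence $G$ is not covered by finitely many cosets of $C_G(s)$ together with a finite set. Equivalently, $\{t: tst^{-1}\notin F^{-1}F\}$ maps onto the infinite set $s^G\setminus F^{-1}F$, so it is infinite and survives the removal of the finitely many elements forbidden by disjointness. This is precisely the content of \cref{lem:leftsch}, where $h_n$ must satisfy $h_nsh_n^{-1}\notin E_n^{-1}E_n$ together with three finite exclusion conditions.

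With that repair, your proof coincides with the paper's (\cref{prop:amenleft}, \cref{lem:leftsch}, \cref{prop:leftschvec}). The only difference is in the left estimate. You bound $\|\lambda(g)\xi-\xi\|_2$ by the triangle inequality, which requires the stronger condition $\sum_n(|gF_n\triangle F_n|/|F_n|)^{1/2}<\infty$, hence your $\varepsilon_n=4^{-n}$. The paper instead uses disjointness of the pieces to get $\|\lambda(g)\xi-\xi\|_2^2\le\Phi_D(g)$, and so needs only $\sum_n|gF_n\triangle F_n|/|F_n|<\infty$. Both are fine, since F{\o}lner sets can be taken as invariant as desired.
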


The usual Dirichlet topology on \(\mathcal{D}_{2}\left(G,\lambda\right)\) is pointwise convergence of the corresponding \(\ell^{2}\)-valued cocycles. More precisely, for \(\xi\in\mathcal{D}_{2}\left(G,\lambda\right)\) let \(b_{\xi}:G\to\ell^{2}\left(G\right)\) be the cocycle \(b_{\xi}\left(g\right)=\lambda\left(g\right)\xi-\xi\). Then \(\xi_{n}\to\xi\) if \(b_{\xi_{n}}\to b_{\xi}\) pointwise, meaning that
\[\left\Vert\lambda\left(g\right)\left(\xi_{n}-\xi\right)-\left(\xi_{n}-\xi\right)\right\Vert_{\ell^{2}\left(G\right)}\to0\quad\text{for every }g\in G.\]
To make it Hausdorff, one may also require \(\xi_{i}\left(e_{G}\right)\to\xi\left(e_{G}\right)\) (the density assertion below is valid in either convention). The aforementioned Cheeger--Gromov theorem says that for amenable \(G\), the space \(\ell^{2}\left(G\right)\oplus\mathbb{R}\mathbf{1}\) is dense in \(\mathcal{D}_{2}\left(G,\lambda\right)\), hence so is the larger set of symmetric \(\ell^{2}\)-cocycles \(\mathcal{D}_{2}\left(G,\lambda\right)\cap\mathcal{D}_{2}\left(G,\rho\right)\). We observe that in the non-FC case, the complement of this set inside \(\mathcal{D}_{2}\left(G,\lambda\right)\) is dense as well.

\begin{cor}\label{cor:dense}
For every countable amenable group \(G\) that is not an FC-group, the set of left asymmetric \(\ell^{2}\)-cocycles, \(\mathcal{D}_{2}\left(G,\lambda\right)\setminus\mathcal{D}_{2}\left(G,\rho\right)\), is dense in \(\mathcal{D}_{2}\left(G,\lambda\right)\).
\end{cor}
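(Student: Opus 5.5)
The argument is a translation trick. By \cref{mthm2}, there is some \(\eta\in\mathcal{D}_{2}\left(G,\lambda\right)\setminus\mathcal{D}_{2}\left(G,\rho\right)\), since the two spaces differ and the theorem's proof produces a left scheme, i.e.\ a left cocycle that fails to be a right cocycle. If instead one only had a right asymmetric cocycle \(\zeta\), we would pass to a left one by composing with inversion: set \(\check\zeta\left(h\right)=\zeta\left(h^{-1}\right)\). Then \(\lambda\left(g\right)\check\zeta-\check\zeta\) is the inversion of \(\rho\left(g\right)\zeta-\zeta\), and vice versa, so inversion swaps the two Dirichlet spaces and \(\ell^{2}\)-membership is preserved. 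Either way we obtain some \(\eta\) in the left space but not in the right one.

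Now fix an arbitrary \(\xi\in\mathcal{D}_{2}\left(G,\lambda\right)\). By the Cheeger--Gromov theorem, quoted above, we may choose \(\xi_{n}\in\ell^{2}\left(G\right)\oplus\mathbb{R}\mathbf{1}\) with \(\xi_{n}\to\xi\) in the Dirichlet topology. If the Hausdorff convention is used, we also arrange \(\xi_{n}\left(e_{G}\right)\to\xi\left(e_{G}\right)\) by adding constants. Define
\[
\psi_{n}:=\xi_{n}+\varepsilon_{n}\eta,\qquad \varepsilon_{n}>0,\ \varepsilon_{n}\to0 .
\]
Each \(\psi_{n}\) lies in \(\mathcal{D}_{2}\left(G,\lambda\right)\), because that space is a vector space. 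It does not lie in \(\mathcal{D}_{2}\left(G,\rho\right)\): otherwise \(\eta=\varepsilon_{n}^{-1}\left(\psi_{n}-\xi_{n}\right)\) would lie there too, since \(\xi_{n}\) is a coboundary and hence symmetric. For convergence, note that for every \(g\in G\),
\[
\left\Vert\lambda\left(g\right)\left(\psi_{n}-\xi\right)-\left(\psi_{n}-\xi\right)\right\Vert_{\ell^{2}}\le\left\Vert b_{\xi_{n}}\left(g\right)-b_{\xi}\left(g\right)\right\Vert_{\ell^{2}}+\varepsilon_{n}\left\Vert b_{\eta}\left(g\right)\right\Vert_{\ell^{2}}\to0 .
\]
Likewise \(\psi_{n}\left(e_{G}\right)\to\xi\left(e_{G}\right)\). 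Hence \(\psi_{n}\to\xi\), which proves density.

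Because the topology is not first countable a priori, we should phrase this with nets or basic neighbourhoods. A basic neighbourhood of \(\xi\) is determined by finitely many \(g\) and a tolerance. We pick one \(\xi'\) in the coboundary space that is close at those finitely many \(g\), and then pick \(\varepsilon\) small enough. The argument is otherwise identical.

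The main obstacle is simply ensuring that an asymmetric cocycle exists in the left-minus-right direction, which rests entirely on \cref{mthm2} together with the inversion symmetry above. The rest is linear algebra combined with the density of coboundaries.
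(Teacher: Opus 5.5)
Your proof is correct. It uses the same perturbation idea as the paper (add a small multiple of a fixed left asymmetric cocycle to a symmetric one), but it routes it through an extra ingredient the paper avoids.

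The paper does not use Cheeger--Gromov density at all. Given $\eta\in\mathcal{D}_{2}(G,\lambda)$, if $\eta\notin\mathcal{D}_{2}(G,\rho)$ there is nothing to prove. Otherwise $\eta$ is itself symmetric, so $\eta+t\xi$ is asymmetric for every $t>0$ and tends to $\eta$ as $t\searrow0$. Here $\xi$ is a fixed element of $\mathcal{D}_{2}(G,\lambda)\setminus\mathcal{D}_{2}(G,\rho)$. So the paper only uses the soft fact that, in a space topologized by seminorms, the complement of the proper linear subspace $\mathcal{D}_{2}(G,\lambda)\cap\mathcal{D}_{2}(G,\rho)$ is dense.

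You instead perturb approximating coboundaries $\xi_{n}$. This spares you the case split, but it makes the corollary depend on the vanishing of reduced first $\ell^{2}$-cohomology, a far deeper input than needed. Amenability then enters twice, whereas the paper uses it only through \cref{mthm2}. You can also drop the case split without Cheeger--Gromov: for any $\eta$, the vector $\eta+t\xi$ lies in $\mathcal{D}_{2}(G,\rho)$ for at most one value of $t$.

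Two minor points:
\begin{itemize}
\item Your inversion argument is correct, but you don't need it, since \cref{prop:leftschvec} already produces an element of $\mathcal{D}_{2}(G,\lambda)\setminus\mathcal{D}_{2}(G,\rho)$.
\item Your concern about first countability is unnecessary. The Dirichlet topology is defined by the countably many seminorms $\xi\mapsto\Vert\lambda(g)\xi-\xi\Vert_{\ell^{2}(G)}$, $g\in G$, together with $|\xi(e_{G})|$ in the Hausdorff convention, so sequences suffice.
\end{itemize}
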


The proof of \cref{mthm2} introduces the main technical object of this work, which we call \emph{left scheme}. Informally, a left scheme is a sequence of finite subsets with two competing features: summability of normalized left boundaries, and strong displacement under right translation by a distinguished element. Thus a left scheme is a skewed F{\o}lner configuration: the sets are almost invariant from the left, in a summable sense, but are separated from a prescribed right translation.

\smallskip

Recall that the first (unreduced) \(\ell^{2}\)-cohomology of the left regular representation of \(G\) is the space
\[H^{1}\left(G,\lambda\right):=Z^{1}\left(G,\lambda\right)\big/B^{1}\left(G,\lambda\right),\]
where \(Z^{1}\left(G,\lambda\right)\) is the space of \(\ell^{2}\)-cocycles for \(\lambda\), and \(B^{1}\left(G,\lambda\right)\) is the subspace of \(\ell^{2}\)-coboundaries. Constructing nonzero classes in \(H^{1}\left(G,\lambda\right)\) is generally challenging. Since asymmetric \(\ell^{2}\)-cocycles are never \(\ell^{2}\)-coboundaries, they are a source of nonzero classes. Moreover, for amenable groups, the Peterson--Thom theorem~\cite[Thm.~2.5]{peterson2011group} implies that the associated cocycles \(b_{\xi}\left(g\right):=\lambda\left(g\right)\xi-\xi\) are proper.

\subsection{Asymmetric Bernoulli schemes}

We next explain the dynamical meaning of this \(\ell^{2}\)-asymmetry. The relevant class of actions is that of nonsingular Bernoulli shifts, which form a fundamental and flexible family in nonsingular ergodic theory: they are the nonsingular analogue of the classical probability preserving Bernoulli shifts, obtained by allowing the marginal probabilities to vary with the group element. They have attracted substantial recent attention in ergodic theory, operator algebras, and measurable group theory; see~\cite{vaes2018bernoulli,bjorklund2018bernoulli,kosloff2019proving,danilenko2019weak,bjorklund2021ergodicity,Kosloff2021,berendschot2022nonsingular} and the survey \cite{danilenko2023ergodic}.

\smallskip

For the purposes of the present paper, nonsingular Bernoulli shifts provide a canonical setting in which the distinction between left and right \(\ell^{2}\)-Dirichlet structures becomes dynamically visible. In the probability preserving Bernoulli case the marginals are constant, so the left and right shifts preserve the same product measure. In the nonsingular case, however, the vector of marginal probabilities has a nontrivial orbit when translated by group elements. Kakutani's criterion detects precisely whether the measure class is compatible with this translation action. Thus the question of whether \(\mathcal{D}_{2}\left(G,\lambda\right)\) and \(\mathcal{D}_{2}\left(G,\rho\right)\) agree has a direct dynamical counterpart: it asks whether the same marginal data that make a Bernoulli product measure nonsingular for the left shift also make it nonsingular for the right shift.

\smallskip

Let \(G\) be a countable group, and consider the product space
\[\Omega_{G}:=\{0,1\}^{G}.\]
This space has two canonical actions of \(G\), namely the left shift and the right shift:
\[L\left(g\right)\left(\omega_{h}\right)_{h\in G}=\left(\omega_{g^{-1}h}\right)_{h\in G}\quad\text{and}\quad R\left(g\right)\left(\omega_{h}\right)_{h\in G}=\left(\omega_{hg}\right)_{h\in G}.\]

\smallskip

For a \(\left(0,1\right)\)-valued vector \(\xi\in\left(0,1\right)^{G}\subset\mathbb{R}^{G}\), define a probability product measure \(\mu_{\xi}\) on \(\Omega_{G}\) by
\[\mu_{\xi}:=\bigotimes\nolimits_{h\in G}\left(\xi\left(h\right),1-\xi\left(h\right)\right).\]
The probability space \(\left(\Omega_{G},\mu_{\xi}\right)\) is called a {\bf Bernoulli scheme}. When \(\xi\) is constant, the measure \(\mu_{\xi}\) is invariant under both the left and the right shifts, and one recovers the usual probability preserving Bernoulli shift. Thus the classical Bernoulli shifts sit inside this family as the most symmetric case.

\smallskip

For a nonconstant \(\xi\), the measure \(\mu_{\xi}\) is no longer invariant, and the natural replacement for invariance is nonsingularity (also known as quasi-invariance): the shift should preserve the measure class of \(\mu_{\xi}\). The Bernoulli scheme is {\bf left nonsingular} if \(L\left(g\right)_{\ast}\mu_{\xi}\sim\mu_{\xi}\) for every \(g\in G\), and {\bf right nonsingular} if \(R\left(g\right)_{\ast}\mu_{\xi}\sim\mu_{\xi}\) for every \(g\in G\). Since
\[L\left(g\right)_{\ast}\mu_{\xi}=\mu_{\lambda\left(g\right)\xi}\quad\text{and}\quad R\left(g\right)_{\ast}\mu_{\xi}=\mu_{\rho\left(g\right)\xi},\]
the nonsingularity of the shift is a question about absolute continuity of product measures. Then by Kakutani's classical theorem~\cite{kakutani1948equivalence}, the Bernoulli scheme \(\left(\Omega_{G},\mu_{\xi}\right)\) is left nonsingular precisely when
\[\sqrt{\xi}\in\mathcal{D}_{2}\left(G,\lambda\right)\quad\text{and}\quad\sqrt{\mathbf{1}-\xi}\in \mathcal{D}_{2}\left(G,\lambda\right).\]
The analogous criterion with \(\rho\) characterizes right nonsingularity.

\smallskip

Nonsingularity alone is a measure class condition. From a dynamical point of view, the next basic requirement is conservativity, meaning measurable recurrence; see \cref{sct:nonsBer}. For probability preserving actions conservativity follows from the Poincar\'{e} recurrence theorem, but for nonsingular actions it is an additional and often delicate property. In the present Bernoulli setting, conservativity is especially useful because it automatically upgrades to a strong ergodic behavior. Indeed, a conservative nonsingular Bernoulli shift over an amenable group whose marginals stay away from \(0\) is weakly mixing. This was proved by one of the authors for \(G=\mathbb{Z}\) with ergodicity \cite[Thm.~3]{kosloff2019proving}, and generalized to weak mixing for countable amenable groups by Danilenko~\cite[Thm.~0.2]{danilenko2019weak} (cf. \cite[\S5]{bjorklund2021ergodicity}). Therefore, to obtain dynamically meaningful Bernoulli schemes, one must also control the conservativity of the resulting shift action.

\smallskip

The left schemes constructed for \cref{mthm2} already produce marginal vectors which are compatible with the left shift and incompatible with the right shift. To make the resulting Bernoulli shifts conservative, we refine these schemes by imposing a recurrence condition. This recurrence condition controls the growth of the Radon--Nikodym cocycle, and allows us to apply a criterion of Vaes--Wahl~\cite[\S4]{vaes2018bernoulli} (see \cref{prop:VaWa}). The result is that the same algebraic obstruction, namely the failure of the FC property, exactly produces asymmetric nonsingular Bernoulli shifts with strong ergodic behavior.

\begin{thm}\label{mthm3}
Every countable amenable group \(G\) that is not an FC-group admits a Bernoulli scheme \(\left(\Omega_{G},\mu_{\xi}\right)\) whose left shift action is nonsingular, conservative and weakly mixing, while the right shift by some element of \(G\) is singular.
\end{thm}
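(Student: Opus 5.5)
The plan is to build the marginal vector from a left scheme refined by a recurrence condition. First we show that a non-FC amenable \(G\) has an element \(s\) with infinite conjugacy class. We then construct finite sets \(F_{1},F_{2},\dots\subset G\) with three properties. They are summably left-F{\o}lner: \(\sum_{n}|gF_{n}\triangle F_{n}|/|F_{n}|<\infty\) for every \(g\) (more precisely, a weighted version matched to the amplitudes below). They are strongly displaced on the right: \(|F_{n}s\cap F_{n}|\le\frac12|F_{n}|\). They are pairwise disjoint and spread out.

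Both metric requirements can be met at once. Since the conjugacy class of \(s\) is infinite, one can choose a left F{\o}lner set \(A\) and then a right translate \(Ax\) with \(x\) chosen so that \(x s x^{-1}\) lies far outside \(AA^{-1}\). This gives \(Ax\,s\cap Ax=(A\cdot xsx^{-1}\cap A)x=\emptyset\). Right translation preserves left F{\o}lner ratios, so \(F_{n}:=A_{n}x_{n}\) works, and further choices of \(x_{n}\) make the sets disjoint.

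Given the scheme, we set
\[
\xi=\tfrac12+\sum_{n}a_{n}\mathbf 1_{F_{n}},
\]
with amplitudes \(a_{n}\in(0,\tfrac14)\). We choose them so that \(\sum_{n}a_{n}^{2}|gF_{n}\triangle F_{n}|<\infty\) for every \(g\), while \(\sum_{n}a_{n}^{2}|F_{n}s\triangle F_{n}|\ge\sum_{n}a_{n}^{2}|F_{n}|/2=\infty\). Because the marginals stay in \([\frac14,\frac34]\), the functions \(\sqrt{\xi}\) and \(\sqrt{1-\xi}\) differ from \(\xi\) by bi-Lipschitz transformations. Kakutani's criterion then gives left nonsingularity, and singularity of \(R(s)\) follows because \(\rho(s)\sqrt\xi-\sqrt\xi\notin\ell^{2}(G)\). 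A natural choice is \(a_{n}^{2}\approx 1/|F_{n}|\), once \(|gF_{n}\triangle F_{n}|/|F_{n}|\) has been made summable along an enumeration of \(G\) by choosing \(A_{n}\) sufficiently invariant with respect to \(g_{1},\dots,g_{n}\).

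Conservativity is the main obstacle. We plan to use the Vaes--Wahl criterion (\cref{prop:VaWa}): it is enough that
\[
\sum_{g}\exp\Bigl(-C\,\|\lambda(g)\sqrt\xi-\sqrt\xi\|_{2}^{2}\Bigr)=\infty
\]
for a suitable constant \(C\). This is a growth control on the Dirichlet energy of the cocycle. We handle it by imposing the recurrence condition in the construction. Inductively, after fixing \(F_{1},\dots,F_{n-1}\), we choose a finite set \(E_{n}\subset G\) of fresh group elements with \(|E_{n}|\) as large as needed. Every \(g\in E_{n}\) should have \(\|\lambda(g)\sqrt{\xi_{n-1}}-\sqrt{\xi_{n-1}}\|_{2}^{2}\le K\) for the partial sum \(\xi_{n-1}\); such \(g\) exist because a long left F{\o}lner set moves the finitely many \(F_{k}\) by little. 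The later sets \(F_{m}\), \(m\ge n\), are then required to be so invariant under \(E_{n}\), and placed so far away, that they add at most \(1\) to this energy. Then \(\sum_{g\in E_{n}}e^{-C(K+1)}\ge|E_{n}|e^{-C(K+1)}\), and the total sum diverges. Since each new requirement is a finite invariance demand on the next left F{\o}lner set, it is compatible with the scheme.

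Weak mixing then follows from conservativity by Danilenko's theorem for amenable groups with marginals bounded away from \(0\) and \(1\).
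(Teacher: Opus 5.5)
Your architecture matches the paper's. You right-translate left F{\o}lner sets \(A\) by elements \(x\) with \(xsx^{-1}\notin A^{-1}A\); it must be \(A^{-1}A\), not \(AA^{-1}\), since \(a\,xsx^{-1}=a'\) means \(xsx^{-1}=a^{-1}a'\). You then separate the translates, set \(\xi=\tfrac12+\sum_{n}a_{n}\mathbf{1}_{F_{n}}\) with \(a_{n}\asymp|F_{n}|^{-1/2}\), and conclude with Kakutani, Vaes--Wahl and Danilenko. You should make explicit that ``spread out'' means \(F_{n}s\cap F_{m}=\emptyset\) for all \(n,m\), as in \cref{lem:leftsch}; this is what rules out cancellation in \(\rho(s)\xi-\xi\).

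The step that fails as written is the recurrence step. No bound \(K\) independent of \(n\) can work, and your reason for it is mistaken. The earlier \(F_{k}\) were made almost invariant only under finitely many prescribed elements, and a fixed finite set is moved completely off itself by all but finitely many \(g\). Concretely, write \(\sqrt{\xi_{n-1}}=\sqrt{1/2}+\eta\) with \(\eta=\sum_{k<n}c_{k}\mathbf{1}_{F_{k}}\geq0\), \(c_{k}\asymp a_{k}\), and put \(S=\bigcup_{k<n}F_{k}\). For every \(g\) outside the finite set \(SS^{-1}\) one has \(\langle\lambda(g)\eta,\eta\rangle=0\), hence
\[\big\|\lambda(g)\sqrt{\xi_{n-1}}-\sqrt{\xi_{n-1}}\big\|_{2}^{2}=2\|\eta\|_{2}^{2}=2\sum\nolimits_{k<n}c_{k}^{2}|F_{k}|\asymp n.\]
More decisively, \(\sqrt{\xi}\) is not an \(\ell^{2}\)-coboundary (it is not even right-Dirichlet). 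So the cocycle \(g\mapsto\lambda(g)\sqrt{\xi}-\sqrt{\xi}\) is proper, by Peterson--Thom as recalled in the introduction. For fixed \(K\), only finitely many \(g\) then have energy at most \(K+1\), and your disjoint nonempty blocks \(E_{n}\) cannot all exist. A symptom of this: with a uniform \(K\), your demand that \(|E_{n}|\) be large would be superfluous, since \(|E_{n}|\geq1\) would already force divergence.

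The repair stays inside your scheme and is exactly the trade-off the paper makes. The trivial bound \(|gF_{k}\triangle F_{k}|\leq2|F_{k}|\) gives
\[\sum\nolimits_{k<n}c_{k}^{2}|gF_{k}\triangle F_{k}|\leq K_{n}:=2\sum\nolimits_{k<n}c_{k}^{2}|F_{k}|\asymp n\quad\text{for \emph{every} } g.\]
Then proceed as follows:
\begin{enumerate}
\item Fix \(C\) first, from the Vaes--Wahl threshold with \(\delta=1/4\), adjusted for passing between \(\xi\) and \(\sqrt{\xi}\).
\item Let \(E_{n}\) be any fresh set with \(|E_{n}|\geq e^{C(K_{n}+1)}\), exponentially large in \(n\).
\item Require all later \(F_{m}\) to be invariant enough under \(E_{n}\).
\end{enumerate}
The additive bound \(\|\lambda(g)\sqrt{\xi}-\sqrt{\xi}\|_{2}^{2}\leq\sum_{k}c_{k}^{2}|gF_{k}\triangle F_{k}|\) holds because the \(F_{k}\) are disjoint and \(c_{k}>0\). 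With it, each block contributes at least \(1\) to the Vaes--Wahl series.

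In \cref{prop:inficc} the same balance appears as an exhaustion by sets of size \(\approx q^{n}\) with tolerances \(q^{-n}\). This gives \(\Phi_{E}(g_{j})\leq2\log_{q}j+O(1)\) and a harmonic series, after which the amplitude \(\epsilon<\sqrt{\kappa}/4\) is tuned to \(\kappa=\tfrac12\log q\). You instead fix \(C\) and tune the block sizes. That is equally valid once the energy bound is allowed to grow with \(n\).
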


\begin{rem}\label{rem:partcons}
When \(G\) admits an element \(s_{o}\) which has infinite order and infinite conjugacy class, one can construct the Bernoulli scheme \(\left(\Omega_{G},\mu_{\xi}\right)\) in such a way that, additionally to the properties in \cref{mthm3}, the restricted left \(\mathbb{Z}\)-action of \(\left\langle s_{o}\right\rangle\) is nonsingular and conservative, while the right shift by \(s_{o}\) is singular.
\end{rem}

\subsection*{Organization}

In \cref{sct:lftsch} we introduce left schemes and prove that a countable amenable group admits a left scheme precisely when it is not an FC-group. We then show that left schemes produce asymmetric \(\ell^{2}\)-cocycles. In \cref{sct:reclftsch} we introduce recurrent left schemes and use them to construct conservative asymmetric Bernoulli schemes. We then prove that every countable amenable non-FC group admits recurrent left schemes. In \cref{sct:geomleftsch} we give concrete geometric constructions in the discrete Heisenberg group and in amenable wreath products \(A\wr\mathbb{Z}\). Finally, in \cref{sct:finproofs} we assemble the proofs of the main theorems. Appendix~\ref{app:vaes} presents Vaes' proof of the nonamenable implication in \cref{mthm1}.

\section{Left schemes and asymmetric \texorpdfstring{\(\ell^{2}\)}{l2}-cocycles}\label{sct:lftsch}

The purpose of this section is to introduce left schemes and explain how they are used in asymmetric constructions. We will start with a general construction of symmetric \(\ell^{2}\)-cocycles.

\subsection{Balanced F{\o}lner geometry}

The following construction of symmetric \(\ell^{2}\)-cocycles uses balanced (two-sided) F{\o}lner geometry, and it will inspire the later construction of asymmetric \(\ell^{2}\)-cocycles.

\begin{prop}\label{prop:symmetcoc}
Every countably infinite amenable group \(G\) admits a symmetric \(\ell^{2}\)-cocycle which is not an \(\ell^{2}\)-coboundary.
\end{prop}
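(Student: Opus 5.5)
The plan is to build \(\xi\) as a sum of normalized indicator functions of two-sided F{\o}lner sets:
\[\xi=\sum_{n}\frac{\mathbf{1}_{F_{n}}}{\sqrt{\left|F_{n}\right|}}.\]
Here \(\left(F_{n}\right)\) is a suitable sequence of finite subsets of \(G\). We choose the sets so that the left and right boundary errors are summable in \(\ell^{2}\), while the \(\ell^{2}\)-norms of the summands do not decay. The motivating example is \(G=\mathbb{Z}\) with \(F_{n}=[0,4^{n})\): there \(\xi(k)\approx\sum_{n:4^{n}>k}2^{-n}\) behaves like a bounded, nonconstant, slowly varying function that is not in \(\ell^{2}\).

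\textbf{Step 1 (two-sided F{\o}lner sets).} Enumerate \(G=\{g_{1},g_{2},\dots\}\). For every finite \(K\subset G\) and \(\varepsilon>0\), an amenable group has a finite set \(F\) with
\[\left|gF\triangle F\right|<\varepsilon\left|F\right|\quad\text{and}\quad\left|Fg\triangle F\right|<\varepsilon\left|F\right|\quad\text{for all } g\in K.\]
To see this, apply left-invariance of an invariant mean to the diagonal action of \(G\times G\) on \(G\), \((a,b)\cdot h=ahb^{-1}\). The group \(G\times G\) is amenable, and its action on \(G\) is amenable because the stabilizers are conjugates of the diagonal subgroup, which is amenable. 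Then use the standard F{\o}lner criterion for amenable actions. Alternatively, one can use the known fact that amenable groups admit two-sided F{\o}lner sequences.

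Choose \(F_{n}\) that are \((\{g_{1},\dots,g_{n}\},4^{-n})\)-invariant on both sides. Then
\[\left\Vert\lambda(g)\mathbf{1}_{F_{n}}-\mathbf{1}_{F_{n}}\right\Vert_{2}=\left|gF_{n}\triangle F_{n}\right|^{1/2}\le 2^{-n}\left|F_{n}\right|^{1/2}.\]
So for each fixed \(g\), and \(n\) large enough that \(g\in\{g_{1},\dots,g_{n}\}\), the summand \(\mathbf{1}_{F_{n}}/\sqrt{|F_{n}|}\) moves by at most \(2^{-n}\) in \(\ell^{2}\) under \(\lambda(g)\), and the same holds for \(\rho(g)\). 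The finitely many remaining summands are finitely supported, so they contribute \(\ell^{2}\) terms.

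\textbf{Step 2 (well-definedness and the cocycle property).} We need \(\xi\) to be finite pointwise. For this, also require
\[\left|F_{n}\right|\ge 4^{n}\left|F_{n-1}\right|,\]
which is possible since \(G\) is infinite and F{\o}lner sets can be taken as large as desired. Then \(\sum_{n}\left|F_{n}\right|^{-1/2}<\infty\), so \(\xi\) is bounded. It follows that \(\lambda(g)\xi-\xi=\sum_{n}\left(\lambda(g)-I\right)\mathbf{1}_{F_{n}}/\sqrt{|F_{n}|}\) converges absolutely in \(\ell^{2}\), and likewise for \(\rho(g)\). Hence \(\xi\) is a symmetric \(\ell^{2}\)-cocycle.

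\textbf{Step 3 (not a coboundary).} This is the main obstacle. We must rule out \(\xi=f+c\mathbf{1}\) with \(f\in\ell^{2}(G)\).

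First, we may arrange the \(F_{n}\) to be increasing and to exhaust \(G\), by replacing \(F_{n}\) with \(F_{n}\cup F_{n-1}\) type modifications. A cleaner route is simply to note that \(\xi\ge 0\) and that \(\xi(h)\to c\) along \(h\to\infty\) would be forced, since \(f(h)\to0\) outside finite sets.

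I would then try to make \(\xi\) not converge to a constant at infinity. Choose the \(F_{n}\) pairwise disjoint, which is possible because F{\o}lner sets can be translated: \(F_{n}a\) is still left F{\o}lner, and right F{\o}lner for the conjugated set \(a^{-1}Ka\). To handle this, choose \(a\) first and then \(F_{n}\) invariant for \(aKa^{-1}\). With disjoint supports, \(\xi\) takes the value \(|F_{n}|^{-1/2}\) on \(F_{n}\) and \(0\) off \(\bigcup_{n}F_{n}\). Since the sets \(F_{n}\) are disjoint and there are infinitely many of them, it suffices to keep \(\bigcup_{n}F_{n}\) co-infinite. Then \(\xi\to0\) at infinity, so \(c=0\). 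But \(\|\xi\|_{2}^{2}=\sum_{n}1=\infty\), so \(\xi\notin\ell^{2}(G)\), a contradiction.

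The delicate point is arranging disjointness while preserving two-sided invariance. I expect the conjugation bookkeeping for right translates to be the step needing the most care.
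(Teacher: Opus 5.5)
You have a genuine gap in Step 3: the mechanism you propose for making the two-sided F{\o}lner sets pairwise disjoint does not work. (Steps 1 and 2 are sound. They differ harmlessly from the paper: you use the triangle inequality with $4^{-n}$-invariance so the square roots $2^{-n}$ are summable, plus a growth condition for pointwise finiteness. The paper instead uses disjointness to bound $\|\lambda(g)\xi-\xi\|_2^2$ by $\sum_n|gF_n\triangle F_n|/|F_n|$ directly.) For $F_na$ to be right $K$-invariant you must choose $F_n$ after $a$, right-invariant for $aKa^{-1}$. But $F_na$ missing the finite set $E=F_1\cup\dots\cup F_{n-1}$ means $F_n\cap Ea^{-1}=\emptyset$, which constrains $F_n$, and the prior choice of $a$ does nothing to enforce it. If you instead pick $a$ after $F_n$, so that $a\notin F_n^{-1}E$, you lose control of $aKa^{-1}$. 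You also cannot in general take $a$ to centralize $K$; for instance $D_{\infty}$ has trivial center. So the conjugation bookkeeping just reduces the problem to itself: find a two-sided F{\o}lner set avoiding a prescribed finite set.

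The paper handles exactly this point by removal rather than translation (\cref{lem:balanced}). Take a two-sided F{\o}lner set $S$ with $|S|$ much larger than $|E|$ and put $F_n:=S\setminus E$. Since
\[gF_n\triangle F_n\subseteq(gS\triangle S)\cup g(S\cap E)\cup(S\cap E),\]
and likewise on the right, both boundaries grow by at most $2|E|$, which is negligible relative to $|S|$. Alternatively, you can drop disjointness altogether, because your Step 2 already suffices. For $h$ outside the finite set $F_1\cup\dots\cup F_{N-1}$ one has $0\le\xi(h)\le\sum_{n\ge N}|F_n|^{-1/2}$, so $\xi\to0$ at infinity. Hence any decomposition $\xi=f+c\mathbf{1}$ with $f\in\ell^{2}(G)$ forces $c=0$. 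Since all summands are nonnegative, $\xi^2\ge\sum_n|F_n|^{-1}\mathbf{1}_{F_n}$ pointwise, so $\|\xi\|_2^2\ge\sum_n1=+\infty$, a contradiction. Either way, the co-infiniteness of $\bigcup_nF_n$ that you ask for is not needed.
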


We first record an elementary lemma using the F{\o}lner property.

\begin{lem}\label{lem:balanced}
Let \(G\) be a countably infinite amenable group with an exhaustion by finite symmetric sets \(K_{1}\subseteq K_{2}\subseteq\dotsm\). There is a sequence \(\left(F_{n}\right)_{n\geq 1}\) of pairwise disjoint finite sets with \(\left|F_{n}\right|\to\infty\), such that 
\[\max_{g\in K_{n}}\frac{\left|gF_{n}\triangle F_{n}\right|}{\left|F_{n}\right|}\leq\frac{1}{n^{2}}\quad\text{and}\quad\max_{g\in K_{n}}\frac{\left|F_{n}g\triangle F_{n}\right|}{\left|F_{n}\right|}\leq\frac{1}{n^{2}}\quad\text{for every }n\geq 1.\]
\end{lem}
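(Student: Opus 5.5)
Every countable amenable group admits two-sided Følner sets. I will invoke this fact and then make the sets pairwise disjoint by translating them. The standard route to two-sided sets goes through the product group. Consider \(G\times G\) acting on \(G\) by \((a,b)\cdot h=ahb^{-1}\). This action is transitive and the stabilizer of \(e_G\) is the diagonal, so it is not free. A cleaner route is available: since \(G\) is amenable, there is a bi-invariant mean on \(\ell^{\infty}(G)\). Indeed, \(G\times G\) is amenable and acts on \(G\) as above, and an invariant mean for an amenable group acting on a set pushes forward to an invariant mean on that set. Namely, fix any \(h_0\), take an invariant mean \(m\) on \(G\times G\), and define \(M(f)=m\big((a,b)\mapsto f(a h_0 b^{-1})\big)\).

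From a two-sided invariant mean on \(G\), the usual Namioka/Day argument gives, for every finite \(K\) and \(\varepsilon>0\), a finite set \(F\) with \(|gF\triangle F|\le\varepsilon|F|\) and \(|Fg\triangle F|\le\varepsilon|F|\) for all \(g\in K\). The steps are as follows.
\begin{itemize}
\item Approximate the mean by finitely supported probability densities \(\phi\). The approximation must be weak\(^*\), and then the Day convexity trick upgrades it to norm, giving \(\|\delta_g*\phi-\phi\|_1\) and \(\|\phi*\delta_g-\phi\|_1\) small simultaneously for all \(g\in K\). The trick applies to the finitely many vectors \((\lambda(g)\phi-\phi,\rho(g)\phi-\phi)_{g\in K}\) in \(\ell^1(G)^{2|K|}\).
\item Apply the layer-cake decomposition \(\phi=\int_0^\infty \mathbf 1_{\{\phi>t\}}dt\). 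The level sets \(F_t=\{\phi>t\}\) satisfy
\[\int\big(\textstyle\sum_{g\in K}|gF_t\triangle F_t|+|F_tg\triangle F_t|\big)dt\le \varepsilon\int|F_t|\,dt.\]
Hence some single level set works for both sides simultaneously.
\end{itemize}
The point is that the pigeonhole step is done on the summed left-plus-right boundary, not separately for each side.

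Next I make the sets large and disjoint, choosing \(F_n\) inductively. At stage \(n\), take \(\varepsilon=1/n^2\) and \(K=K_n\), and produce a two-sided Følner set \(F\). Largeness is automatic: since \(G\) is infinite, a set with \(|gF\triangle F|<|F|\cdot\frac{1}{n^2}\) for all \(g\in K_n\) with \(|K_n|\) large must have \(|F|\) large. To force \(|F_n|\ge n\), enlarge \(K_n\) to a finite set of size at least \(2n^2 n\). Then \(\sum_{g\in K}|gF\cap F|\ge|K||F|(1-\varepsilon)\), while the left side is at most \(|F|^2\). Enlarging \(K\) only strengthens the conclusion.

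Disjointness is the main obstacle, since right translation \(F\mapsto Fx\) destroys left invariance in general, and left translation destroys right invariance. Conjugation \(F\mapsto xFx^{-1}\) does not preserve two-sided invariance with respect to the fixed \(K_n\) either. My plan is instead to translate on the left by \(x\) and check what happens on the right. We have \(Fx\triangle F\)-type quantities \(|xFg\triangle xF|=|Fg\triangle F|\), so right invariance is preserved exactly under left translation. Left invariance becomes invariance under \(x^{-1}K_nx\).

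So I will choose \(F\) Følner for the finite set \(K_n\cup x^{-1}K_nx\), after first fixing \(x\) with \(x\notin \bigcup_{m<n}F_m \cdot F^{-1}\). This is circular, since \(F\) depends on \(x\). I break the circularity by choosing \(F\) first, Følner on both sides for \(K_n\), and then choosing \(x\) on the left so that \(xF\) avoids the finite set \(\bigcup_{m<n}F_m\). This is possible because \(\{x: xF\cap E\neq\emptyset\}=EF^{-1}\) is finite. The new set \(xF\) is then right-Følner for \(K_n\), but it is left-Følner only for \(x^{-1}K_nx\).

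To fix this, replace \(F\) by a set that is two-sided Følner for \(K_n\) and also for all conjugates by elements of a large finite set. Alternatively, and more simply, note that the pushed-forward bi-invariant mean vanishes on finite sets. So the weak\(^*\) approximants \(\phi\) can be chosen supported in \(G\setminus E\) with \(E=\bigcup_{m<n}F_m\), because restricting a mean to the complement of a finite set changes nothing. With this choice the level sets avoid \(E\) automatically. I will carry out this last version, restricting the approximating densities to \(G\setminus E\) before the Day argument, since it avoids conjugation entirely.
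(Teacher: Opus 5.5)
Your final version is correct, but it takes a different route from the paper.

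\textbf{What the paper does.} It treats a two-sided F{\o}lner sequence \(\left(S_{m}\right)\) as a black box, citing Kerr--Li. It gets disjointness by deletion: \(F_{n}:=S_{m}\setminus E_{n}\), where \(E_{n}=F_{1}\cup\dotsm\cup F_{n-1}\). Here \(m\) is chosen so that \(S_{m}\) is \(\frac{1}{4n^{2}}\)-invariant on both sides, \(\left|S_{m}\cap E_{n}\right|<\left|S_{m}\right|/4n^{2}\), and \(\left|S_{m}\right|>n\). The bound then follows from \(\left|gF_{n}\triangle F_{n}\right|\leq\left|gS_{m}\triangle S_{m}\right|+2\left|S_{m}\cap E_{n}\right|\) and \(\left|F_{n}\right|\geq\frac{3}{4}\left|S_{m}\right|\).

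\textbf{What you do.} You rebuild the two-sided F{\o}lner sets from scratch. You push an invariant mean on \(G\times G\) forward to a bi-invariant mean \(M\) on \(G\). You then apply Day's convexity trick in \(\ell^{1}\left(G\right)^{2\left|K\right|}\). Finally, a layer-cake pigeonhole on the summed left-plus-right boundary yields a single level set that works on both sides. You get disjointness by working with densities supported in \(G\setminus E\) from the start.

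\textbf{Justifications to state.} Two facts carry your disjointness step, and you assert them without proof:
\begin{enumerate}
\item \(M\left(\mathbf{1}_{E}\right)=0\) for finite \(E\). This holds because \(G\) is infinite and \(M\) is left-invariant, so arbitrarily many disjoint translates of a point have equal mass summing to at most \(1\).
\item The approximants can be moved off \(E\). If \(\phi_{\alpha}\to M\) weak\(^{*}\), then \(\phi_{\alpha}\mathbf{1}_{G\setminus E}/\left(1-\phi_{\alpha}\left(E\right)\right)\to M\) as well. Convex combinations and positive level sets of such densities remain inside \(G\setminus E\).
\end{enumerate}

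\textbf{Comparison.} Your route is self-contained: it proves the existence of two-sided F{\o}lner sets rather than citing it, and disjointness comes with no error bookkeeping. The paper's route is much shorter. It needs only the elementary fact that \(\left|S_{m}\right|\to\infty\), which makes any fixed finite set negligible. The same deletion trick, applied to your own sets, would have removed the circularity you ran into with left translation in one line. So the exploratory discussion of translating and conjugating can simply be dropped.

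\textbf{A minor point on size.} Your bound \(\left|F\right|\geq\left|K\right|\left(1-\varepsilon\right)\) is vacuous at \(n=1\), where \(\varepsilon=1\). This is harmless, since only \(\left|F_{n}\right|\to\infty\) matters. In fact enlarging \(K_{n}\) is unnecessary. Using \(\left|gF\cap F\right|=\left|F\right|-\frac{1}{2}\left|gF\triangle F\right|\) gives \(\left|F_{n}\right|\geq\left|K_{n}\right|\left(1-\frac{1}{2n^{2}}\right)\), and \(\left|K_{n}\right|\to\infty\) because \(\left(K_{n}\right)\) exhausts the infinite group \(G\).
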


\begin{proof}[Proof of \cref{lem:balanced}]
Since \(G\) is amenable, it admits a two-sided F{\o}lner sequence (see e.g. \cite[Thm.~4.10]{kerrli2016ergodic}). Fix such a F{\o}lner sequence \(\left(S_{m}\right)_{m\geq1}\) and define \(\left(F_{n}\right)_{n\geq1}\) inductively as follows. Define \(F_{0}:=\emptyset\) and put \(E_{1}:=\emptyset\). Assume \(F_{0},\dotsc,F_{n-1}\) are defined and put \(E_{n}:=F_{1}\cup\dotsm\cup F_{n-1}\). Choose \(m>n\) large so that
\[\max_{g\in K_{n}}\frac{\left|gS_{m}\triangle S_{m}\right|}{\left|S_{m}\right|}<\frac{1}{4n^{2}},\quad\max_{g\in K_{n}}\frac{\left|S_{m}g\triangle S_{m}\right|}{\left|S_{m}\right|}<\frac{1}{4n^{2}},\quad\frac{\left|S_{m}\cap E_{n}\right|}{\left|S_{m}\right|}<\frac{1}{4n^{2}},\quad\left|S_{m}\right|>n,\]
where the last two conditions hold because \(\left|S_{m}\right|\to\infty\). Define then \(F_{n}:=S_{m}\setminus E_{n}\).

\smallskip

The sets \(\left(F_{n}\right)_{n\geq1}\) are pairwise disjoint by construction, and for every \(n\) we have
\[\left|F_{n}\right|=\left|S_{m}\right|-\left|S_{m}\cap E_{n}\right|>\left(1-1/4n^{2}\right)\cdot\left|S_{m}\right|\geq 3/4\cdot\left|S_{m}\right|.\]
In particular, since \(\left|S_{m}\right|>n\), this shows that \(\left|F_{n}\right|\to\infty\). We show the left boundary condition, and the right boundary condition is similar. Fix \(g\in K_{n}\). Then
\[gF_{n}\triangle F_{n}\subseteq\left(gS_{m}\triangle S_{m}\right)\cup g\left(S_{m}\cap E_{n}\right)\cup\left(S_{m}\cap E_{n}\right),\]
we get
\[\left|gF_{n}\triangle F_{n}\right|\leq\left|gS_{m}\triangle S_{m}\right|+2\left|S_{m}\cap E_{n}\right|<\left(1/4n^{2}+2/4n^{2}\right)\cdot\left|S_{m}\right|=3/4n^{2}\cdot\left|S_{m}\right|.\]
Then both estimates combined give the left boundary condition.
\end{proof}

\begin{proof}[Proof of \cref{prop:symmetcoc}]
Fix an exhaustion \(K_{1}\subseteq K_{2}\subseteq\dotsm\) of \(G\) by finite symmetric sets, and let \(\left(F_{n}\right)_{n\geq1}\) be the sequence supplied by \cref{lem:balanced}. Define
\[\xi\in\mathbb{R}^{G},\quad\xi\left(h\right):=\sum\nolimits_{n\geq1}\frac{1}{\sqrt{\left|F_{n}\right|}}\cdot\mathbf{1}_{F_{n}}\left(h\right),\quad h\in G.\]
Fix \(g\in G\), and put
\[n\left(g\right):=\min\left\{n\geq1:g\in K_{n}\right\}.\]
Since the sets \(\left(F_{n}\right)_{n\geq1}\) are pairwise disjoint, so are the sets \(\left(gF_{n}\right)_{n\geq1}\). Therefore, for every \(h\in G\), at most one of the terms \(\mathbf{1}_{gF_{n}}\left(h\right)\) is nonzero, and at most one of the terms \(\mathbf{1}_{F_{n}}\left(h\right)\) is nonzero. Then we get
\[\Big|\sum\nolimits_{n\geq n\left(g\right)}\frac{1}{\sqrt{\left|F_{n}\right|}}\left(\mathbf{1}_{gF_{n}}\left(h\right)-\mathbf{1}_{F_{n}}\left(h\right)\right)\Big|^{2}\leq\sum\nolimits_{n\geq n\left(g\right)}\frac{1}{\left|F_{n}\right|}\mathbf{1}_{gF_{n}\triangle F_{n}}\left(h\right).\]
Summing over \(h\in G\), and using that \(g\in K_{n}\) for every \(n\geq n\left(g\right)\), we obtain
\[\Big\Vert\sum\nolimits_{n\geq n\left(g\right)}\frac{1}{\sqrt{\left|F_{n}\right|}}\left(\mathbf{1}_{gF_{n}}-\mathbf{1}_{F_{n}}\right)\Big\Vert_{\ell^{2}\left(G\right)}^{2}\leq\sum\nolimits_{n\geq n\left(g\right)}\frac{\left|gF_{n}\triangle F_{n}\right|}{\left|F_{n}\right|}\leq\sum\nolimits_{n\geq n\left(g\right)}\frac{1}{n^{2}}<+\infty.\]
Since there are only finitely many terms with \(n<n\left(g\right)\), it follows that \(\lambda\left(g\right)\xi-\xi\in\ell^{2}\left(G\right)\).

For the right action, one repeats the same argument with \(\left(F_{n}g^{-1}\right)_{n\geq1}\) in place of \(\left(gF_{n}\right)_{n\geq1}\). Since \(K_{n}\) is symmetric and \(g\in K_{n}\) for every \(n\geq n\left(g\right)\), we also have \(g^{-1}\in K_{n}\) for every \(n\geq n\left(g\right)\), and therefore
\[\frac{\left|F_{n}g^{-1}\triangle F_{n}\right|}{\left|F_{n}\right|}\leq\frac{1}{n^{2}}\quad\text{for every }n\geq n\left(g\right).\]
Thus \(\rho\left(g\right)\xi-\xi\in\ell^{2}\left(G\right)\). As \(g\in G\) is arbitrary, it follows that \(\xi\in\mathcal{D}_{2}\left(G,\lambda\right)\cap\mathcal{D}_{2}\left(G,\rho\right)\).

\smallskip

Let us show that \(\xi\notin\ell^{2}\left(G\right)\oplus\mathbb{R}\mathbf{1}\). Assume otherwise, so that \(\xi-p\in\ell^{2}\left(G\right)\) for some \(p\in\mathbb{R}\). If \(p=0\), since \(\left(F_{n}\right)_{n\geq1}\) are pairwise disjoint we get
\[\left\Vert\xi\right\Vert_{\ell^{2}\left(G\right)}^{2}=\sum\nolimits_{n\geq1}\frac{1}{\left|F_{n}\right|}\cdot\left|F_{n}\right|=\sum\nolimits_{n\geq1}1=+\infty,\]
contrary to the assumption. If \(p\neq 0\), since \(\left|F_{n}\right|\to\infty\) we may pick \(N\geq1\) such that \(\big|\frac{1}{\sqrt{\left|F_{n}\right|}}-p\big|>\left|p\right|/2\) for all \(n\geq N\). Then using that \(\xi\mid_{F_{n}}\equiv\frac{1}{\sqrt{\left|F_{n}\right|}}\) we obtain
\[\left\Vert\xi-p\right\Vert_{\ell^{2}\left(G\right)}^{2}\geq\sum\nolimits_{n\geq1}\sum\nolimits_{h\in F_{n}}\Big|\frac{1}{\sqrt{\left|F_{n}\right|}}-p\Big|^{2}\geq\left|p\right|^{2}/4\cdot\sum\nolimits_{n\geq N}\left|F_{n}\right|=+\infty,\]
which is again contrary to the assumption. Hence \(\xi\notin\ell^{2}\left(G\right)\oplus\mathbb{R}\mathbf{1}\).
\end{proof}

\subsection{FC-groups}

A group \(G\) is called an {\bf FC-group} if every element of \(G\) has finite conjugacy class. Let us see that the commutativity obstruction to \(\ell^{2}\)-asymmetry generalizes to the class of FC-groups.

\begin{prop}\label{prop:FC}
If \(G\) is an FC-group then \(\mathcal{D}_{2}\left(G,\lambda\right)=\mathcal{D}_{2}\left(G,\rho\right)\).
\end{prop}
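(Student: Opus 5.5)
Since \(\lambda(g)\) and \(\rho(g)\) commute (left and right translations always commute), I plan to relate the right action by \(g\) to the left action by its conjugates. The key identity: for \(\xi\in\mathbb{R}^{G}\) and \(h\in G\),
\[\rho\left(g\right)\xi\left(h\right)=\xi\left(hg\right)=\xi\left(\left(hgh^{-1}\right)h\right)=\lambda\left(hg^{-1}h^{-1}\right)\xi\left(h\right).\]
So at each point \(h\), the right translate by \(g\) agrees with the left translate by the conjugate \(hg^{-1}h^{-1}\). If \(G\) is an FC-group, the conjugacy class \(C:=\left\{hg^{-1}h^{-1}:h\in G\right\}\) of \(g^{-1}\) is finite.

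Fix \(\xi\in\mathcal{D}_{2}(G,\lambda)\) and \(g\in G\). Partition \(G=\bigsqcup_{c\in C}H_{c}\) with \(H_{c}:=\{h\in G: hg^{-1}h^{-1}=c\}\). For \(h\in H_{c}\) we have \(\left(\rho(g)\xi-\xi\right)(h)=\left(\lambda(c)\xi-\xi\right)(h)\). Hence
\[\left\Vert\rho\left(g\right)\xi-\xi\right\Vert_{\ell^{2}\left(G\right)}^{2}=\sum\nolimits_{c\in C}\sum\nolimits_{h\in H_{c}}\left|\left(\lambda\left(c\right)\xi-\xi\right)\left(h\right)\right|^{2}\leq\sum\nolimits_{c\in C}\left\Vert\lambda\left(c\right)\xi-\xi\right\Vert_{\ell^{2}\left(G\right)}^{2}<+\infty,\]
a finite sum of finite terms. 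This gives \(\mathcal{D}_{2}(G,\lambda)\subseteq\mathcal{D}_{2}(G,\rho)\).

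For the reverse inclusion, I will use the symmetric identity \(\lambda(g)\xi(h)=\xi(g^{-1}h)=\xi(h\cdot h^{-1}g^{-1}h)=\rho(h^{-1}g^{-1}h)\xi(h)\) and the finiteness of the conjugacy class of \(g^{-1}\). Alternatively, I can apply the first inclusion after transporting through the inversion map \(\xi\mapsto\check\xi\), \(\check\xi(h)=\xi(h^{-1})\), which intertwines \(\lambda\) and \(\rho\) and preserves \(\ell^{2}\).

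There is no real obstacle. The only point needing care is checking the conjugation identity with the paper's conventions \(\lambda(g)\xi(h)=\xi(g^{-1}h)\) and \(\rho(g)\xi(h)=\xi(hg)\), so that the pointwise decomposition indexed by \(h\) is correct.
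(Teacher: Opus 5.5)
Your proposal is correct and follows essentially the same route as the paper. Both proofs use the pointwise identity that makes $(\rho(g)\xi-\xi)(h)$ equal to $(\lambda(c)\xi-\xi)(h)$ for the conjugate $c=hg^{-1}h^{-1}$, bound $\|\rho(g)\xi-\xi\|_{2}^{2}$ by the finite sum $\sum_{c}\|\lambda(c)\xi-\xi\|_{2}^{2}$ over the conjugacy class, and obtain the reverse inclusion by symmetry (your opening remark that $\lambda$ and $\rho$ commute is never actually used).
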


\begin{proof}[Proof of \cref{prop:FC}]
For \(g,h\in G\) write \(g^{h}:=hgh^{-1}\). For every \(\xi\in\mathbb{R}^{G}\) and every \(g,h\in G\) one has
\[\left(\rho\left(g^{-1}\right)\xi-\xi\right)\left(h\right)=\xi\left(hg^{-1}\right)-\xi\left(h\right)=\left(\lambda\left(g^{h}\right)\xi-\xi\right)\left(h\right),\]
and therefore
\[\left\Vert\rho\left(g^{-1}\right)\xi-\xi\right\Vert_{\ell^{2}\left(G\right)}^{2}\leq\sum\nolimits_{c\in g^{G}}\left\Vert\lambda\left(c\right)\xi-\xi\right\Vert_{\ell^{2}\left(G\right)}^{2}.\]
Now if \(G\) is an FC-group then \(g^{G}\) is finite, and hence \(\xi\in\mathcal{D}_{2}\left(G,\lambda\right)\) implies that \(\rho\left(g^{-1}\right)\xi-\xi\in\ell^{2}\left(G\right)\). This shows that \(\mathcal{D}_{2}\left(G,\lambda\right)\subseteq\mathcal{D}_{2}\left(G,\rho\right)\). The converse follows symmetrically.
\end{proof}

For finitely generated groups, it is a basic fact that every FC-group is virtually abelian. However, the converse is false: the infinite dihedral group \(D_{\infty}\) is virtually abelian, in fact virtually cyclic, but is not an FC-group. The fact that virtual commutativity does not generally obstruct \(\ell^{2}\)-asymmetry was observed by M. Kapovich in \cite{MOthread}, where he suggested an asymmetric \(\ell^{2}\)-cocycle on \(D_{\infty}\).

\subsection{Skewed F{\o}lner geometry}\label{sct:leftconst}

Here we present a general method of asymmetric constructions. As a convention, our constructions are leftwise, that is, we construct elements of \(\mathcal{D}_{2}\left(G,\lambda\right)\setminus \mathcal{D}_{2}\left(G,\rho\right)\). All of our constructions have obvious rightwise versions.

\smallskip

We start by defining left schemes in a form that makes the underlying skewed F{\o}lner geometry transparent. In this way, finding left schemes is simplified and becomes more transparent geometrically. For the actual asymmetric constructions, we will use a combinatorial refinement of left schemes (\cref{lem:leftsch}).

\begin{defn}\label{dfn:lftsch}
Let \(G\) be a countable group and fix \(s_{o}\in G\). An {\bf \(s_{o}\)-left scheme} is a sequence \(\left(E_{n}\right)_{n\geq1}\) of nonempty finite subsets of \(G\), such that:
\begin{enumerate}
    \item \(E_{n}s_{o}\cap E_{n}=\emptyset\) for every \(n\geq 1\).
    \item \(\Phi_{E}\left(g\right):=\sum_{n\geq1}\frac{\left|gE_{n}\triangle E_{n}\right|}{\left|E_{n}\right|}<+\infty\) for every \(g\in G\).
\end{enumerate}
\end{defn}

\begin{rem}\label{rem:summ}
When \(G\) is generated by a symmetric set \(S\), it suffices to check that \(\Phi_{E}\left(s\right)<+\infty\) for \(s\in S\) to verify condition~(2) of left scheme. Indeed, let \(g\in G\), and write \(g=s_{1}\dotsm s_{k}\) with \(s_{i}\in S\), \(1\leq i\leq k\). For a finite set \(A\subset G\), put \(A_{i}=s_{1}\dotsm s_{i}A\) for \(1\leq i\leq k\). Then one has the telescoping identity
\[A_{0}:=A,\quad A_{k}=gA,\quad gA\triangle A\subseteq\bigcup\nolimits_{i=1}^{k}\left(A_{i}\triangle A_{i-1}\right)=\bigcup\nolimits_{i=1}^{k}s_{1}\dotsm s_{i-1}\left(s_{i}A\triangle A\right).\]
Since left multiplication preserves cardinalities, it follows that
\[\left|gA\triangle A\right|\leq\sum\nolimits_{i=1}^{k}\left|s_{i}A\triangle A\right|.\]
\end{rem}

We will now verify that non-FC amenable groups admit left schemes, and then we will show how left schemes give rise to asymmetric \(\ell^{2}\)-cocycles.

\begin{prop}\label{prop:amenleft}
Let \(G\) be a countable amenable group and let \(s_{o}\in G\) be any element. Then
\[G\text{ admits an }s_{o}\text{-left scheme}\quad\iff\quad s_{o}\text{ has an infinite conjugacy class}.\]
In particular, a countable amenable group \(G\) admits a left scheme if and only if it is not an FC-group.
\end{prop}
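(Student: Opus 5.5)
The proof splits into the two implications. Neither direction should need more than the definition of a left scheme together with the conjugation identity already used in \cref{prop:FC}. Amenability is used only in the direction that constructs a left scheme.

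\emph{No left scheme when the conjugacy class is finite.} Suppose the conjugacy class \(C:=\{hs_{o}h^{-1}:h\in G\}\) of \(s_{o}\) is finite, and suppose \(\left(E_{n}\right)_{n\geq1}\) is an \(s_{o}\)-left scheme. For \(h\in E_{n}\) write
\[hs_{o}=c_{h}h,\qquad c_{h}:=hs_{o}h^{-1}\in C.\]
By condition~(1), \(hs_{o}\notin E_{n}\), so \(hs_{o}\in c_{h}E_{n}\setminus E_{n}\). The map \(h\mapsto hs_{o}\) is injective, so it embeds \(E_{n}\) into \(\bigcup_{c\in C}\left(cE_{n}\setminus E_{n}\right)\). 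This gives
\[1\leq\sum\nolimits_{c\in C}\frac{\left|cE_{n}\triangle E_{n}\right|}{\left|E_{n}\right|}\quad\text{for every }n\geq1.\]
Summing over \(n\) yields \(\sum_{c\in C}\Phi_{E}\left(c\right)=+\infty\). Since \(C\) is finite, some \(c\in C\) has \(\Phi_{E}\left(c\right)=+\infty\), which contradicts condition~(2). This direction does not use amenability.

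\emph{A left scheme when the conjugacy class is infinite.} Fix an exhaustion \(K_{1}\subseteq K_{2}\subseteq\dotsm\) of \(G\) by finite sets. By amenability, for each \(n\) we choose a nonempty finite set \(F_{n}\) with
\[\max_{g\in K_{n}}\frac{\left|gF_{n}\triangle F_{n}\right|}{\left|F_{n}\right|}\leq\frac{1}{n^{2}}.\]
Only left Følner sets are needed here, not the two-sided ones of \cref{lem:balanced}. The key observation is that right translation preserves left boundaries: \(g\left(F_{n}x\right)\triangle F_{n}x=\left(gF_{n}\triangle F_{n}\right)x\). So every set \(E_{n}:=F_{n}x_{n}\) satisfies the same bound. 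Moreover,
\[E_{n}s_{o}\cap E_{n}=\emptyset\iff F_{n}\left(x_{n}s_{o}x_{n}^{-1}\right)\cap F_{n}=\emptyset\iff x_{n}s_{o}x_{n}^{-1}\notin F_{n}^{-1}F_{n}.\]
Since \(F_{n}^{-1}F_{n}\) is finite and the conjugacy class of \(s_{o}\) is infinite, some \(x_{n}\) with this property exists.

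For any \(g\in G\) we have \(g\in K_{n}\) for all large \(n\), so \(\Phi_{E}\left(g\right)\) is a finite sum plus a tail bounded by \(\sum 1/n^{2}\), hence finite. Thus \(\left(E_{n}\right)_{n\geq1}\) is an \(s_{o}\)-left scheme.

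The ``in particular'' clause follows immediately: \(G\) is not an FC-group exactly when some \(s_{o}\) has an infinite conjugacy class. The step I expected to be the main obstacle was forcing the disjointness \(E_{n}s_{o}\cap E_{n}=\emptyset\) without losing the Følner property, since an arbitrary Følner set may meet its right \(s_{o}\)-translate heavily. The right-translation trick resolves this by converting the problem into choosing a conjugate of \(s_{o}\) that avoids a finite set.
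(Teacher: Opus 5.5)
Your proposal is correct and follows the paper's proof essentially step for step. The construction direction uses the same right-translation trick \(E_{n}=F_{n}x_{n}\) with \(x_{n}s_{o}x_{n}^{-1}\notin F_{n}^{-1}F_{n}\); your obstruction direction is the paper's conjugation covering argument, except that you use the injection \(h\mapsto hs_{o}\) into \(\bigcup_{c}\left(cE_{n}\setminus E_{n}\right)\), which gives the constant \(1\), where the paper uses the inclusion \(E_{n}s_{o}\triangle E_{n}\subseteq\bigcup_{c}\left(cE_{n}\triangle E_{n}\right)\), which gives \(2\).
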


\begin{proof}[Proof of \cref{prop:amenleft}]
For one implication, first note that for every finite set \(E\subseteq G\) one has
\[Es_{o}\triangle E\subseteq\bigcup\nolimits_{c\in s_{o}^{G}}\left(cE\triangle E\right);\]
indeed, if \(h\in Es_{o}\triangle E\), put \(c:=hs_{o}h^{-1}\in s_{o}^{G}\) and we get \(h\in cE\triangle E\). Suppose now that \(\left(E_{n}\right)_{n\geq1}\) is an \(s_{o}\)-left scheme. Then for every \(n\geq 1\) we get
\[2\left|E_{n}\right|=\left|E_{n}s_{o}\triangle E_{n}\right|\leq\sum\nolimits_{c\in s_{o}^{G}}\left|cE_{n}\triangle E_{n}\right|,\]
where the first equality is since \(E_{n}s_{o}\cap E_{n}=\emptyset\). Dividing by \(\left|E_{n}\right|\) and summing over \(n\geq 1\), we obtain
\[+\infty=\sum\nolimits_{n\geq 1}2\leq\sum\nolimits_{c\in s_{o}^{G}}\sum\nolimits_{n\geq 1}\frac{\left|cE_{n}\triangle E_{n}\right|}{\left|E_{n}\right|}=\sum\nolimits_{c\in s_{o}^{G}}\Phi_{E}\left(c\right).\]
By the definition of left scheme \(\Phi_{E}\left(g\right)<+\infty\) for all \(g\in G\), and therefore \(s_{o}^{G}\) is necessarily infinite.

\smallskip

For the converse implication, suppose \(s_{o}\in G\) has an infinite conjugacy class. Fix an exhaustion of \(G\) by finite sets \(K_{1}\subseteq K_{2}\subseteq\dotsm\), and using the F{\o}lner property, find a sequence \(\left(F_{n}\right)_{n\geq 1}\) of finite sets with
\[\max_{g\in K_{n}}\frac{\left|gF_{n}\triangle F_{n}\right|}{\left|F_{n}\right|}\leq\frac{1}{n^{2}}\text{ for every }n\geq 1.\]
Since \(s_{o}\in G\) has an infinite conjugacy class, for every \(n\geq 1\) we can choose \(g_{n}\in G\) such that
\[g_{n}s_{o}g_{n}^{-1}\notin F_{n}^{-1}F_{n}.\]
Define \(\left(E_{n}\right)_{n\geq 1}\) by \(E_{n}:=F_{n}g_{n}\), and we verify that it is an \(s_{o}\)-left scheme:
\begin{enumerate}
    \item For \(n\geq 1\), if \(E_{n}s_{o}\cap E_{n}\neq\emptyset\), then we have \(hs_{o}=h'\) for some \(h,h'\in E_{n}=F_{n}g_{n}\), and thus
    \[g_{n}s_{o}g_{n}^{-1}=g_{n}h^{-1}h'g_{n}^{-1}\in F_{n}^{-1}F_{n},\]
    a contradiction to the construction. Therefore, \(E_{n}s_{o}\cap E_{n}=\emptyset\) for every \(n\geq 1\).
    \item Fix an arbitrary \(g\in G\). Then we have
    \[\frac{\left|gE_{n}\triangle E_{n}\right|}{\left|E_{n}\right|}=\frac{\left|gF_{n}\triangle F_{n}\right|}{\left|F_{n}\right|}\text{ for every }n\geq 1.\]
    Now fix \(N\geq1\) sufficiently large so that \(g\in K_{N}\), and we obtain
    \[\Phi_{E}\left(g\right)=\sum\nolimits_{n\geq1}\frac{\left|gE_{n}\triangle E_{n}\right|}{\left|E_{n}\right|}\leq\sum\nolimits_{n<N}\frac{\left|gE_{n}\triangle E_{n}\right|}{\left|E_{n}\right|}+\sum\nolimits_{n\geq N}\frac{1}{n^{2}}<+\infty.\qedhere\]
\end{enumerate}
\end{proof}

In the following, we construct asymmetric \(\ell^{2}\)-cocycles using left schemes as in \cref{dfn:lftsch}.

\begin{prop}\label{prop:leftschvec}
Let \(G\) be a group admitting an \(s_{o}\)-left scheme. Then there exists \(\xi\in\mathcal{D}_{2}\left(G,\lambda\right)\) such that \(\rho\left(s_{o}\right)\xi-\xi\notin\ell^{2}\left(G\right)\), and hence \(\xi\in\mathcal{D}_{2}\left(G,\lambda\right)\setminus\mathcal{D}_{2}\left(G,\rho\right)\).
\end{prop}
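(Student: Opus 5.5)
The plan is to take $\xi$ as a weighted sum of normalized indicators along a sparse subsequence of the left scheme:
\[\xi:=\sum\nolimits_{k\geq1}\frac{1}{\sqrt{|E_{n_k}|}}\mathbf{1}_{E_{n_k}}.\]
The subsequence $(n_k)$ is chosen so that the left increments become $\ell^{2}$-summable and the sizes $|E_{n_k}|$ grow very fast. Write $E_k$ for $E_{n_k}$ once the subsequence is fixed. This imitates the proof of \cref{prop:symmetcoc}, except that the $E_n$ need not be disjoint.

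\emph{Step 1: the sets grow and have vanishing left boundaries.} Put $\varphi_n(g):=|gE_n\triangle E_n|/|E_n|$. Condition (2) of \cref{dfn:lftsch} gives $\varphi_n(g)\to0$ for every $g\in G$. Since $\varphi_n(g)$ is a multiple of $1/|E_n|$, for a fixed finite $K\subseteq G$ and all large $n$ we get either $gE_n=E_n$ for all $g\in K$, or a lower bound $\varphi_n(g)\geq1/|E_n|$ forcing $|E_n|$ to be large. In the first case $KE_n=E_n$, so $|E_n|\geq|K|$. Applying this along an exhaustion $K_1\subseteq K_2\subseteq\dotsm$ shows $|E_n|\to\infty$.

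\emph{Step 2: the subsequence and the left estimate.} Choose $n_1<n_2<\dotsm$ inductively so that:
\begin{itemize}
\item $\varphi_{n_k}(g)\leq4^{-k}$ for all $g\in K_k$;
\item $|E_{n_k}|\geq 16^{k}\,\big(\sum_{j<k}|E_{n_j}|\big)$;
\item $\sum_{j>k}|E_{n_j}|^{-1/2}\leq\frac14|E_{n_k}|^{-1/2}$.
\end{itemize}
The last condition is arranged by making each new $|E_{n_{k+1}}|$ huge relative to all previous sizes. Then $\xi$ is finite pointwise. For each $g$, the triangle inequality gives
\[\|\lambda(g)\xi-\xi\|_{\ell^{2}}\leq\sum\nolimits_{k}\sqrt{\varphi_{n_k}(g)}.\]
All but finitely many of these terms are at most $2^{-k}$, so $\xi\in\mathcal{D}_{2}(G,\lambda)$. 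Disjointness is not needed here.

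\emph{Step 3: the right estimate (the main obstacle).} We need $\rho(s_o)\xi-\xi\notin\ell^{2}$. By condition (1), $E_k$ and $E_ks_o^{-1}$ are disjoint, so the $k$-th term alone contributes
\[|E_k|^{-1/2}\big(\mathbf{1}_{E_ks_o^{-1}}-\mathbf{1}_{E_k}\big),\]
which has absolute value $|E_k|^{-1/2}$ on $2|E_k|$ points. The difficulty is interference from the other terms, which we handle by localizing to disjoint regions. Put
\[R_k:=(E_k\cup E_ks_o^{-1})\setminus\bigcup\nolimits_{j<k}(E_j\cup E_js_o^{-1}).\]
The $R_k$ are pairwise disjoint, and by the growth condition $|R_k|\geq(2-2\cdot16^{-k})|E_k|\geq|E_k|$. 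On $R_k$, the terms $j<k$ vanish in both $\xi$ and $\rho(s_o)\xi$. The terms $j>k$ change the value by at most $2\sum_{j>k}|E_j|^{-1/2}\leq\frac12|E_k|^{-1/2}$. Hence $|\rho(s_o)\xi-\xi|\geq\frac12|E_k|^{-1/2}$ on $R_k$, so
\[\|\rho(s_o)\xi-\xi\|^{2}\geq\sum\nolimits_k|R_k|\cdot\tfrac14|E_k|^{-1}\geq\sum\nolimits_k\tfrac14=+\infty.\]
Therefore $\xi\notin\mathcal{D}_{2}(G,\rho)$.
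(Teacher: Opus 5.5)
Your proof is correct, but it takes a different route from the paper.

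\textbf{The paper's route.} The paper first applies \cref{lem:leftsch}: it right-translates each $E_n$ to $D_n=E_nh_n$, choosing $h_n$ outside finitely many left cosets of the centralizer $C_G(s_o)$. That centralizer has infinite index because $s_o^G$ is infinite. The resulting $D_n$ are pairwise disjoint, satisfy $D_ns_o\cap D_m=\emptyset$ for all $n,m$, and keep $\Phi_D=\Phi_E$. With $\xi=\sum_n|D_n|^{-1/2}\mathbf{1}_{D_n}$, at most one term is nonzero at each point. So the right estimate is immediate ($\rho(s_o)\xi-\xi=-|D_n|^{-1/2}$ on $D_n$), and the left estimate is $\|\lambda(g)\xi-\xi\|_2^2\le\Phi_D(g)$, using the full summability.

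\textbf{Your route.} You keep the original sets and pass to a sparse subsequence. On the left side you use Minkowski's inequality; this needs $\sum_k\sqrt{\varphi_{n_k}(g)}<\infty$, which is why you sparsify. On the right side you handle overlaps by localizing to the disjoint regions $R_k$, where the $k$-th term dominates both the earlier terms (which vanish there) and the tail.

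\textbf{What each buys.} Your argument never uses the centralizer/coset construction. It also only needs $\varphi_n(g)\to0$ rather than $\Phi_E(g)<\infty$, so it works for a weaker notion of left scheme. What it gives up is the sharp quantitative bound. The paper's $\xi$ satisfies $\|\lambda(g)\xi-\xi\|_2^2\le\Phi(g)$, and this is reused in \cref{prop:leftschbern} to transfer the recurrence condition $\sum_g\exp(-\kappa\Phi(g))=+\infty$ into the Vaes--Wahl criterion. Passing to a subsequence only decreases $\Phi$, so recurrence itself survives. But your Minkowski bound $\bigl(\sum_k\sqrt{\varphi_{n_k}(g)}\bigr)^2$ can be much larger than $\Phi(g)$: roughly the square of a logarithm for the schemes of \cref{prop:inficc}. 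So the conservativity argument would not go through directly with your $\xi$.

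\textbf{A small omission.} In Step 1 you implicitly need $G$ infinite to have $|K_j|\to\infty$. This holds because a left scheme forces $s_o^G$ to be infinite, as in the first half of \cref{prop:amenleft}; the same argument even works with $\varphi_n\to0$ alone.
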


In order to construct the desired asymmetric \(\ell^{2}\)-cocycle, we will need to rearrange left schemes to have additional disjointness.

\begin{lem}\label{lem:leftsch}
Suppose \(G\) admits an \(s_{o}\)-left scheme \(\left(E_{n}\right)_{n\geq1}\). Then \(G\) admits an \(s_{o}\)-left scheme \(\left(D_{n}\right)_{n\geq1}\) with the following properties:
\begin{enumerate}
    \item \(D_{n}\cap D_{m}=\emptyset\) for all \(n\neq m\).
    \item \(D_{n}s_{o}\cap D_{m}=\emptyset\) for all \(n,m\).
    \item \(\Phi_{D}\left(g\right)=\Phi_{E}\left(g\right)<+\infty\) for every \(g\in G\).
\end{enumerate}
\end{lem}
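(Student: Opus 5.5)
The plan is to right-translate the sets of the given scheme, $D_{n}:=E_{n}t_{n}$ for suitably chosen $t_{n}\in G$. Right translation preserves cardinality and commutes with left translation: $g(E_{n}t_{n})\triangle E_{n}t_{n}=(gE_{n}\triangle E_{n})t_{n}$. Hence
\[\frac{\left|gD_{n}\triangle D_{n}\right|}{\left|D_{n}\right|}=\frac{\left|gE_{n}\triangle E_{n}\right|}{\left|E_{n}\right|}\]
for every $g$ and $n$, which gives property~(3), $\Phi_{D}=\Phi_{E}$, for any choice of $(t_{n})$.

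The translates $t_{n}$ must preserve condition~(1) of \cref{dfn:lftsch} and also ensure the new disjointness. This is where some care is needed: $D_{n}s_{o}\cap D_{n}=E_{n}t_{n}s_{o}\cap E_{n}t_{n}$ is empty iff $E_{n}t_{n}s_{o}t_{n}^{-1}\cap E_{n}=\emptyset$. This holds automatically only when $t_{n}$ commutes with $s_{o}$. So I would \emph{not} translate by arbitrary elements; instead I choose $t_{n}$ in the centralizer $C:=C_{G}(s_{o})$, so that $D_{n}s_{o}=E_{n}s_{o}t_{n}$ and $D_{n}s_{o}\cap D_{n}=(E_{n}s_{o}\cap E_{n})t_{n}=\emptyset$. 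Equivalently, one may write $D_{n}=E_{n}t_{n}$ with $t_{n}$ commuting with $s_{o}$.

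The remaining issue is whether $C$ is large enough to separate the finitely many previous sets. The cross conditions needed are, for $m<n$,
\[E_{n}t_{n}\cap D_{m}=\emptyset,\qquad E_{n}s_{o}t_{n}\cap D_{m}=\emptyset,\qquad E_{n}t_{n}\cap D_{m}s_{o}=\emptyset.\]
Each of these says that $t_{n}$ avoids a finite set, namely $E_{n}^{-1}D_{m}$, $s_{o}^{-1}E_{n}^{-1}D_{m}$, or $E_{n}^{-1}D_{m}s_{o}$. So if $C$ is infinite, we choose $t_{1}=e$ and inductively pick $t_{n}\in C$ outside the finite union over $m<n$ of these sets. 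This gives properties~(1) and~(2).

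If $C$ is finite, i.e.\ $s_{o}$ is a torsion-free-less element with small centralizer, I expect this to be the main obstacle. In that case I would first try to modify the choice differently: use $D_{n}:=E_{k_{n}}t_{n}$ with general $t_{n}$ and repair condition~(1) by a trick. Namely, $E_{n}t\,s_{o}\cap E_{n}t=\emptyset$ iff $ts_{o}t^{-1}\notin E_{n}^{-1}E_{n}$. We therefore need $t_{n}$ avoiding finitely many cosets $\{t: ts_{o}t^{-1}=c\}$, with $c\in E_{n}^{-1}E_{n}$, each a coset $t_{c}C$. If $C$ is finite these are finitely many finite sets, so together with the finite avoidance sets above we only need $G$ infinite. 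Since $s_{o}$ has an infinite conjugacy class by \cref{prop:amenleft}, the index $[G:C]$ is infinite, so $G$ is infinite.

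In general, then, the unified argument is: choose $t_{n}\in G$ avoiding the finite set of cross obstructions and avoiding the set $\{t:ts_{o}t^{-1}\in E_{n}^{-1}E_{n}\}$. The latter is a union of finitely many left cosets of $C$. Since $[G:C]=|s_{o}^{G}|=\infty$, such $t_{n}$ exists: the complement of finitely many cosets of an infinite-index subgroup is not contained in a finite set. I would present this single inductive choice, checking that an infinite-index subgroup's finitely many cosets plus a finite set cannot cover $G$.
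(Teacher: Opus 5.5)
Your proposal is correct, and the unified argument you end with is the paper's argument: set $D_n=E_nh_n$ with $h_n$ outside the finitely many left cosets of $C_G(s_o)$ that make up $\{h: hs_oh^{-1}\in E_n^{-1}E_n\}$ and outside the finite cross-obstruction sets, which is possible since $[G:C_G(s_o)]=|s_o^G|=\infty$; the preliminary case split via $t_n\in C_G(s_o)$ is unnecessary. Two small fixes: because the lemma does not assume amenability, note (as the paper does) that the implication ``left scheme $\Rightarrow$ infinite conjugacy class'' in the proof of \cref{prop:amenleft} does not use amenability; and for general $t_n$ the second cross condition reads $t_n\notin E_n^{-1}D_ms_o^{-1}$ rather than $s_o^{-1}E_n^{-1}D_m$, which is still a finite set.
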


\begin{proof}[Proof of \cref{lem:leftsch}]
Fix an \(s_{o}\)-left scheme \(\left(E_{n}\right)_{n\geq1}\). The first implication of \cref{prop:amenleft} holds without amenability, and shows that \(s_{o}\) has an infinite conjugacy class. Equivalently, using the orbit-stabilizer theorem for the conjugation action of \(G\) on itself, the centralizer
\[C_{G}\left(s_{o}\right)=\{g\in G:gs_{o}g^{-1}=s_{o}\}\]
has infinite index in \(G\). We will define a sequence \(\left(h_{n}\right)_{n\geq1}\subseteq G\) and put
\[D_{n}:=E_{n}h_{n},\quad n\geq1.\]
Set \(h_{1}:=e_{G}\). Assume that \(h_{1},\dotsc,h_{n-1}\) have already been chosen for some \(n\geq2\), and write
\[D_{m}:=E_{m}h_{m}\quad\text{for }m<n.\]
We choose \(h_{n}\in G\) such that the following conditions hold:
\begin{enumerate}
    \item \(h_{n}s_{o}h_{n}^{-1}\notin E_{n}^{-1}E_{n}\).
    \item \(E_{n}h_{n}\cap D_{m}=\emptyset\) for every \(m<n\), i.e. \(h_{n}\notin E_{n}^{-1}D_{m}\).
    \item \(E_{n}h_{n}s_{o}\cap D_{m}=\emptyset\) for every \(m<n\), i.e. \(h_{n}\notin E_{n}^{-1}D_{m}s_{o}^{-1}\).
    \item \(D_{m}s_{o}\cap E_{n}h_{n}=\emptyset\) for every \(m<n\), i.e. \(h_{n}\notin E_{n}^{-1}D_{m}s_{o}\).
\end{enumerate}
We shall explain why such \(h_{n}\) exists. Note that for every \(a\in G\), the set \(X_{a}:=\left\{ h\in G:hs_{o}h^{-1}=a\right\}\) is either empty or forms a left coset of \(C_{G}\left(s_{o}\right)\); indeed, fixing \(h_{o}\in X_{a}\), then \(hs_{o}h^{-1}=a=h_{o}s_{o}h_{o}^{-1}\) is equivalent to \(\left(h_{o}^{-1}h\right)s_{o}\left(h_{o}^{-1}h\right)^{-1}=s_{o}\) and so \(h\in h_{o}C_{G}\left(s_{o}\right)\), showing that \(X_{a}=h_{o}C_{G}\left(s_{o}\right)\). Then since \(E_{n}^{-1}E_{n}\) is finite, the forbidden set in condition (1) is contained in a finite union of left cosets of \(C_{G}\left(s_{o}\right)\). The forbidden sets in conditions (2)-(4) are all finite, and thus trivially contained in a finite union of left cosets of \(C_{G}\left(s_{o}\right)\). Therefore, since \(C_{G}\left(s_{o}\right)\) has infinite index in \(G\), such \(h_{n}\) can indeed be found.

\smallskip

We will verify that \(\left(D_{n}\right)_{n\geq 1}\) satisfies the desired properties. The property \(D_{n}\cap D_{m}=\emptyset\) for all \(m<n\) is given by condition (2). The property \(D_{n}s_{o}\cap D_{m}=\emptyset\) for all \(n\neq m\) is given by conditions (3) and (4). The property \(D_{n}s_{o}\cap D_{n}=\emptyset\) for all \(n\) is given by condition (1): if \(D_{n}s_{o}\cap D_{n}\neq\emptyset\), then \(eh_{n}s_{o}=e'h_{n}\) for some \(e,e'\in E_{n}\), hence \(h_{n}s_{o}h_{n}^{-1}=e^{-1}e'\in E_{n}^{-1}E_{n}\), a contradiction to condition (1). Finally, for every \(g\in G\) and every \(n\geq 1\), since \(D_{n}=E_{n}h_{n}\) and \(gD_{n}\triangle D_{n}=\left(gE_{n}\triangle E_{n}\right)h_{n}\), we have
\[\frac{\left|gD_{n}\triangle D_{n}\right|}{\left|D_{n}\right|}=\frac{\left|gE_{n}\triangle E_{n}\right|}{\left|E_{n}\right|},\quad\text{hence}\quad\Phi_{D}\left(g\right)=\Phi_{E}\left(g\right)<+\infty.\qedhere\]
\end{proof}

\begin{proof}[Proof of \cref{prop:leftschvec}]
Let \(\left(D_{n}\right)_{n\geq1}\) be an \(s_{o}\)-left scheme as in \cref{lem:leftsch}, and define
\[\xi\in\mathbb{R}^{G},\quad\xi\left(h\right):=\sum\nolimits_{n\geq1}\frac{1}{\sqrt{\left|D_{n}\right|}}\cdot\mathbf{1}_{D_{n}}\left(h\right),\quad h\in G.\]
By condition~(1) of \cref{lem:leftsch}, for every \(h\in G\) there is at most one \(n\) such that \(h\in D_{n}\), and hence \(\xi\left(h\right)\) is well-defined. We first show that \(\rho\left(s_{o}\right)\xi-\xi\notin\ell^{2}\left(G\right)\). If \(h\in D_{n}\), then by condition~(2) of \cref{lem:leftsch}, \(hs_{o}\notin D_{m}\) for every \(m\geq1\), and therefore
\[\left(\rho\left(s_{o}\right)\xi-\xi\right)\left(h\right)=\xi\left(hs_{o}\right)-\xi\left(h\right)=-\frac{1}{\sqrt{\left|D_{n}\right|}}.\]
It follows that
\[\left\Vert\rho\left(s_{o}\right)\xi-\xi\right\Vert_{\ell^{2}\left(G\right)}^{2}\geq\sum\nolimits_{n\geq1}\sum\nolimits_{h\in D_{n}}\frac{1}{\left|D_{n}\right|}=\sum\nolimits_{n\geq1}1=+\infty.\]
We now prove that \(\xi\in\mathcal{D}_{2}\left(G,\lambda\right)\). Fix \(g\in G\). Since \(\left(D_{n}\right)_{n\geq1}\) are pairwise disjoint, so are \(\left(gD_{n}\right)_{n\geq1}\). Then for every \(h\in G\), at most one of the terms \(\mathbf{1}_{D_{n}}\left(h\right)\) is nonzero, and at most one of the terms \(\mathbf{1}_{gD_{n}}\left(h\right)\) is nonzero. Therefore,
\[\Big|\sum\nolimits_{n\geq1}\frac{1}{\sqrt{\left|D_{n}\right|}}\left(\mathbf{1}_{gD_{n}}\left(h\right)-\mathbf{1}_{D_{n}}\left(h\right)\right)\Big|^{2}\leq\sum\nolimits_{n\geq1}\frac{1}{\left|D_{n}\right|}\mathbf{1}_{gD_{n}\triangle D_{n}}\left(h\right).\]
Summing over \(h\in G\), we get
\[\left\Vert\lambda\left(g\right)\xi-\xi\right\Vert_{\ell^{2}\left(G\right)}^{2}\leq\sum\nolimits_{n\geq1}\frac{\left|gD_{n}\triangle D_{n}\right|}{\left|D_{n}\right|}=\Phi_{D}\left(g\right)<+\infty.\qedhere\]
\end{proof}

\section{Recurrent left schemes and asymmetric Bernoulli constructions}\label{sct:reclftsch}

\subsection{Nonsingular actions and Bernoulli schemes}\label{sct:nonsBer}

We recall the basics of nonsingular ergodic theory needed for the Bernoulli construction.

\smallskip

Let \(G\curvearrowright\left(\Omega,\mu\right)\) be a nonsingular action of a countable discrete group on a standard probability space. This means that \(G\) acts on \(\Omega\) in a measurable way, and \(\mu\) is a probability measure whose measure class is preserved by the action of \(G\). The action is called {\bf conservative} if it is Poincar\'{e} recurrent: for every measurable set \(A\) in \(\Omega\) with \(\mu\left(A\right)>0\), there is \(e_{G}\neq g\in G\) such that \(\mu\left(g.A\cap A\right)>0\). This is one of several equivalent forms of conservativity, and for comprehensive treatments see~\cite[\S1.3]{krengel2011ergodic}, \cite[\S1.1-1.6]{aaronson1997}, \cite{avraham2024hopf}. The action is called {\bf ergodic} if for every measurable set \(A\) in \(\Omega\) satisfying \(\mu\left(G.A\triangle A\right)=0\), it holds that \(\mu\left(A\right)\in\{0,1\}\). The action is called {\bf weakly mixing} if for every ergodic probability preserving action \(G\curvearrowright\left(\Omega',\mu'\right)\), the diagonal nonsingular action \(G\curvearrowright\left(\Omega\times\Omega',\mu\otimes\mu'\right)\) is ergodic. Weak mixing implies ergodicity, and when the underlying probability space is nonatomic, ergodicity implies conservativity.

\smallskip

A {\bf Bernoulli scheme} on \(G\) is a probability space of the form
\[\left(\Omega_{G},\mu_{\xi}\right):=\big(\{0,1\}^{G},\bigotimes\nolimits_{h\in G}\left(\xi\left(h\right),1-\xi\left(h\right)\right)\big),\]
for some \(\xi\in\left(0,1\right)^{G}\). Thus the marginal vector \(\xi\) records the inhomogeneity of the product measure, and Kakutani's criterion turns the nonsingularity of the shift into an \(\ell^{2}\)-condition on this vector as follows. By Kakutani's dichotomy~\cite{kakutani1948equivalence}, for every pair of vectors \(\xi,\eta\in\left(0,1\right)^{G}\), the product measures \(\mu_{\xi}\) and \(\mu_{\eta}\) are either equivalent (i.e. mutually absolutely continuous) or mutually singular. Moreover, Kakutani's criterion asserts that the measures \(\mu_{\xi}\) and \(\mu_{\eta}\) are equivalent if and only if both
\[\sqrt{\xi}-\sqrt{\eta}\in\ell^{2}\left(G\right)\quad\text{and}\quad\sqrt{\mathbf{1}-\xi}-\sqrt{\mathbf{1}-\eta}\in\ell^{2}\left(G\right).\]

\smallskip

Equip \(\Omega_{G}\) with the left shift and the right shift actions of \(G\), given by
\[L\left(g\right)\left(\omega_{h}\right)_{h\in G}=\left(\omega_{g^{-1}h}\right)_{h\in G}\quad\text{and}\quad R\left(g\right)\left(\omega_{h}\right)_{h\in G}=\left(\omega_{hg}\right)_{h\in G}.\]
A Bernoulli scheme \(\left(\Omega_{G},\mu_{\xi}\right)\) is said to be {\bf left} ({\bf right}) {\bf nonsingular} if for every \(g\in G\),
\[L\left(g\right)_{\ast}\mu_{\xi}\sim\mu_{\xi}\quad\left(R\left(g\right)_{\ast}\mu_{\xi}\sim\mu_{\xi}\text{, respectively}\right),\]
where \(\sim\) denotes equivalence of measures. Here, for a transformation \(T\) we denote by \(T_{\ast}\mu_{\xi}=\mu_{\xi}\circ T^{-1}\) the pushforward of \(\mu_{\xi}\) by \(T\). Note that the pushforward of product measures by left and right shifts are still product measures:
\[L\left(g\right)_{\ast}\mu_{\xi}=\mu_{\lambda\left(g\right)\xi}\quad\text{and}\quad R\left(g\right)_{\ast}\mu_{\xi}=\mu_{\rho\left(g\right)\xi}.\]
Using Kakutani's criterion, one has that \(\left(\Omega_{G},\mu_{\xi}\right)\) is left nonsingular if and only if both
\[\sqrt{\xi}\in \mathcal{D}_{2}\left(G,\lambda\right)\text{ and }\sqrt{\mathbf{1}-\xi}\in\mathcal{D}_{2}\left(G,\lambda\right).\]
An analogous criterion holds for right nonsingularity. In what follows, when \(\mu_{\xi}\) is given by \(\xi\in\left(0,1\right)^{G}\) that satisfies this left nonsingularity condition of Kakutani, we will refer to \(G\curvearrowright\left(\Omega_{G},\mu_{\xi}\right)\) as a {\bf left nonsingular Bernoulli shift}, and we use the term {\bf right nonsingular Bernoulli shift} analogously.

\smallskip

For amenable \(G\), if a left nonsingular Bernoulli shift \(\left(\Omega_{G},\mu_{\xi}\right)\) satisfies \(\inf_{g\in G}\xi\left(g\right)>0\) and is conservative, then it is further weakly mixing. This was proved in \cite[Thm.~0.2]{danilenko2019weak}, generalizing the case of \(G=\mathbb{Z}\) with ergodicity proved in \cite[Thm.~3]{kosloff2019proving} (see also \cite[\S5]{bjorklund2021ergodicity}). This result raises the significance of finding conditions on \(\xi\) which ensure that its corresponding shift is nonsingular and conservative. Here we do it using the following criterion of Vaes--Wahl, which is a special case of \cite[Prop.~4.1]{vaes2018bernoulli} (see also \cite[\S5]{bjorklund2021ergodicity}).

\begin{prop}[Vaes--Wahl]\label{prop:VaWa} 
Let \(G\) be a countable group, and let \(\xi\in\mathcal{D}_{2}\left(G,\lambda\right)\) satisfy \(\delta\leq\xi\left(g\right)\leq1-\delta\) for every \(g\in G\) for some \(0<\delta<1/2\). If there exists \(\kappa>\delta^{-2}+\delta^{-1}\left(1-\delta\right)^{-2}\) such that
\[\sum\nolimits_{g\in G}\exp\left(-\kappa\|\lambda(g)\xi-\xi\|_2^2\right)=\infty,\]
then the corresponding left nonsingular Bernoulli shift \(G\curvearrowright\left(\Omega_{G},\mu_{\xi}\right)\) is conservative.
\end{prop}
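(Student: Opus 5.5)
Since this is a special case of \cite[Prop.~4.1]{vaes2018bernoulli}, one option is simply to cite it. To make the mechanism transparent, I would prove it through the Radon--Nikodym cocycle and a wandering-set argument, as follows.

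\emph{Step 1: the Radon--Nikodym derivatives.} For \(g\in G\), set \(D_{g}:=d\left(L(g)^{-1}\right)_{\ast}\mu_{\xi}/d\mu_{\xi}\), so that \(\mu_{\xi}\left(L(g)A\right)=\int_{A}D_{g}\,d\mu_{\xi}\). By the product structure,
\[\log D_{g}(\omega)=\sum\nolimits_{h\in G}\log\frac{\eta_{g}(h)(\omega_{h})}{\xi(h)(\omega_{h})},\]
where \(\eta_{g}=\lambda(g)^{\pm1}\xi\) and \(p(\omega_h)\) means \(p\) or \(1-p\) according to \(\omega_h\). This is a sum of independent bounded terms.

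\emph{Step 2: a quantitative lower bound on \(D_{g}\).} The bounds \(\delta\le\xi\le 1-\delta\) give a mean estimate
\[-\mathbb{E}\log D_{g}=\mathrm{KL}\le\left(\delta^{-2}+\delta^{-1}(1-\delta)^{-2}\right)\left\Vert\lambda(g)\xi-\xi\right\Vert_{2}^{2}.\]
This is where the constant in the hypothesis on \(\kappa\) comes from, by a second-order Taylor expansion of \(\log\) on \([\delta,1-\delta]\). The same bounds give a variance estimate of order \(\left\Vert\lambda(g)\xi-\xi\right\Vert_{2}^{2}\). From Markov's inequality for \(-\log D_{g}\), or from Chebyshev's inequality, I would deduce that the event \(\left\{D_{g}\ge e^{-\kappa\left\Vert\lambda(g)\xi-\xi\right\Vert_{2}^{2}}\right\}\) has \(\mu_{\xi}\)-measure bounded below uniformly in \(g\). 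The strict inequality on \(\kappa\) is what makes this lower bound positive.

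\emph{Step 3: a wandering set gives a contradiction.} Suppose the action is not conservative. Then there is a wandering set \(W\) of positive measure, so that \(\sum_{g}\mu_{\xi}\left(L(g)W\right)\le1\). On the other hand,
\[\mu_{\xi}\left(L(g)W\right)\ge e^{-\kappa\left\Vert\lambda(g)\xi-\xi\right\Vert_{2}^{2}}\,\mu_{\xi}\left(W\cap\left\{D_{g}\ge e^{-\kappa\left\Vert\lambda(g)\xi-\xi\right\Vert_{2}^{2}}\right\}\right).\]
Summing over \(g\) and using the divergence hypothesis gives \(\sum_{g}\mu_{\xi}(L(g)W)=\infty\) once the last factor is bounded below uniformly in \(g\). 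That contradiction shows the action is conservative.

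\emph{Main obstacle.} The hard step is the uniform lower bound on \(\mu_{\xi}\left(W\cap\{D_{g}\text{ large}\}\right)\). Step 2 only controls the good event for the full measure \(\mu_{\xi}\), not its intersection with an arbitrary \(W\). I would try two ways around this. The first is to approximate \(W\) by a cylinder set \(C\) depending on finitely many coordinates. By independence, the conditional measure on \(C\) is again a product measure with the same \(\delta\)-bounds off finitely many coordinates, so Step 2 can be redone conditionally. The boundedly many remaining coordinates change \(\log D_{g}\) by at most a constant depending only on \(\delta\) and \(|C|\), which is absorbed into the constant. The second option, if that fails, is to run Step 2 on \(\sqrt{D_{g}}\) directly, using
\[\int\sqrt{D_{g}}\,d\mu_{\xi}=\prod\nolimits_{h}\left(\sqrt{\xi\eta_{g}}+\sqrt{(1-\xi)(1-\eta_{g})}\right)(h)\ge e^{-c\left\Vert\lambda(g)\xi-\xi\right\Vert_{2}^{2}},\]
together with the Cauchy--Schwarz inequality on \(W\), as in the Hellinger approach of Vaes--Wahl.
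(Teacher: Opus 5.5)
The paper gives no proof of this statement. It quotes it as a special case of \cite[Prop.~4.1]{vaes2018bernoulli}, so your first option is exactly the paper's route. Your self-contained argument is a genuinely different route, and it is correct, but only through your \emph{first} workaround and only if the constants are chosen in the right order.

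Your second workaround does not remove the obstacle. Lower-bounding \(\int_{W}\sqrt{D_{g}}\,d\mu_{\xi}\) is the same localization problem. Even after approximating \(W\) by a cylinder \(C\), the error term satisfies only \(\int_{C\setminus W}\sqrt{D_{g}}\,d\mu_{\xi}\le\mu_{\xi}(C\setminus W)^{1/2}\). This does not decay with \(\left\Vert\lambda(g)\xi-\xi\right\Vert_{2}^{2}\), whereas the main term does.

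\textbf{Step 2.} Put \(x_{g}:=\left\Vert\lambda(g)\xi-\xi\right\Vert_{2}^{2}\). Markov's inequality is not available, since \(-\log D_{g}\) takes negative values. Use Cantelli's inequality with two bounds:
\begin{itemize}
\item \(\mathbb{E}\log D_{g}\ge-c_{0}x_{g}\), where \(c_{0}=1/(\delta(1-\delta))\) already works, via \(\mathrm{KL}\le\chi^{2}\);
\item \(\mathrm{Var}\log D_{g}\le\delta^{-2}x_{g}\).
\end{itemize}
You also need a fixed shift \(s>0\) in the threshold; otherwise the bound degenerates as \(x_{g}\to0\). For \(s\ge1/(4(\kappa-c_{0})\delta^{2})\) one gets
\[
\mu_{\xi}\big(D_{g}\ge e^{-s-\kappa x_{g}}\big)\ge1-\frac{\delta^{-2}x_{g}}{\delta^{-2}x_{g}+\big(s+(\kappa-c_{0})x_{g}\big)^{2}}\ge\frac{1}{2}\quad\text{for all }g\in G .
\]
This is where \(\kappa>c_{0}\) is used.

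\textbf{Step 3.} Only \emph{after} fixing this probability \(1/2\), choose a cylinder \(C\) on a finite set \(F\) with \(\mu_{\xi}(C\setminus W)<\tfrac{1}{4}\mu_{\xi}(C)\). Put \(c_{F}:=(\delta/(1-\delta))^{|F|}\). The factor of \(D_{g}\) coming from coordinates in \(F\) lies in \([c_{F},c_{F}^{-1}]\). Define \(A_{g}:=\{D_{g}^{F^{c}}\ge c_{F}e^{-s-\kappa x_{g}}\}\). Then:
\begin{itemize}
\item \(A_{g}\) contains \(\{D_{g}\ge e^{-s-\kappa x_{g}}\}\), so \(\mu_{\xi}(A_{g})\ge 1/2\);
\item \(A_{g}\) is independent of \(C\);
\item \(D_{g}\ge c_{F}^{2}e^{-s-\kappa x_{g}}\) on \(C\cap A_{g}\).
\end{itemize}
Hence
\[
\mu_{\xi}\big(L(g)W\big)\ge c_{F}^{2}e^{-s-\kappa x_{g}}\big(\mu_{\xi}(C)\mu_{\xi}(A_{g})-\mu_{\xi}(C\setminus W)\big)\ge\tfrac{1}{4}c_{F}^{2}e^{-s}\mu_{\xi}(C)\,e^{-\kappa x_{g}} .
\]
The sum over \(g\) diverges, contradicting \(\sum_{g}\mu_{\xi}(L(g)W)\le1\).

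The \(|F|\)-dependent loss must go into the threshold, not into the probability bound; otherwise the choice of \(C\) becomes circular. Alternatively, you can bypass \(W\) entirely. The event \(\{\sum_{g}D_{g}=\infty\}\) is unchanged by modifying finitely many coordinates, so it is a tail event. The uniform bound above plus a reverse-Fatou argument shows it has positive measure, hence full measure.

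\textbf{Comparison.} The citation covers arbitrary marginals, with the hypothesis phrased through Hellinger distances. Your argument is elementary and needs only \(\kappa>c_{0}\), for any \(c_{0}\) with \(\mathrm{KL}\le c_{0}x_{g}\). Both \(1/(\delta(1-\delta))\) and the \(1/(2\delta^{2})\) that a second-order Taylor expansion actually yields are strictly smaller than \(\delta^{-2}+\delta^{-1}(1-\delta)^{-2}\). So the stated constant is merely sufficient; it is not where your expansion lands, contrary to your remark.
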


\subsection{Recurrent left schemes}

The criterion of conservativity given in \cref{prop:VaWa} motivates the following refinement of \cref{dfn:lftsch} of left scheme.

\begin{defn}\label{dfn:reclftsch}
Let \(G\) be a countable group and fix \(s_{o}\in G\). An \(s_{o}\)-left scheme \(\left(E_{n}\right)_{n\geq 1}\) is called {\bf recurrent} if there exists \(\kappa>0\) such that
\[\sum\nolimits_{g\in G}\exp\left(-\kappa\Phi_{E}\left(g\right)\right)=+\infty.\]
\end{defn}

\smallskip

In the following, we use recurrent left schemes as in \cref{dfn:reclftsch} to construct weakly mixing left nonsingular Bernoulli shifts that fail to be right nonsingular.

\begin{prop}\label{prop:leftschbern}
Let \(G\) be a countable amenable group admitting a recurrent \(s_{o}\)-left scheme. Then there exists a \(\left(0,1\right)\)-valued vector \(\xi\in\mathcal{D}_{2}\left(G,\lambda\right)\setminus\mathcal{D}_{2}\left(G,\rho\right)\) whose corresponding Bernoulli scheme \(\left(\Omega_{G},\mu_{\xi}\right)\) satisfies the following properties:
\begin{enumerate}
    \item \(G\curvearrowright\left(\Omega_{G},\mu_{\xi}\right)\) is a left nonsingular Bernoulli shift that is conservative and weakly mixing.
    \item \(G\curvearrowright\left(\Omega_{G},\mu_{\xi}\right)\) is not right nonsingular: the right shift by \(s_{o}\) is singular.
\end{enumerate}
\end{prop}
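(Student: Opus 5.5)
Start from a recurrent \(s_{o}\)-left scheme \(\left(E_{n}\right)_{n\geq1}\) with constant \(\kappa_{0}>0\). Apply \cref{lem:leftsch} to obtain \(\left(D_{n}\right)_{n\geq1}\) with pairwise disjoint \(D_{n}\), with \(D_{n}s_{o}\cap D_{m}=\emptyset\) for all \(n,m\), and with \(\Phi_{D}=\Phi_{E}\). In particular \(\left(D_{n}\right)\) is still recurrent with the same \(\kappa_{0}\). As in the proof of \cref{prop:leftschvec}, put \(\eta:=\sum_{n}\left|D_{n}\right|^{-1/2}\mathbf{1}_{D_{n}}\), which takes values in \([0,1]\). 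Fix a small parameter \(c>0\) and define the marginal vector
\[\xi:=\tfrac12\mathbf{1}+c\,\eta.\]
For \(c\leq1/4\) this gives \(1/2\leq\xi\leq3/4\), so \(\xi\) is \(\left(0,1\right)\)-valued and bounded away from \(0\) and \(1\) with \(\delta=1/4\).

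Nonsingularity and asymmetry come first. The computation in \cref{prop:leftschvec} gives \(\left\Vert\lambda(g)\xi-\xi\right\Vert_{2}^{2}=c^{2}\left\Vert\lambda(g)\eta-\eta\right\Vert_{2}^{2}\leq c^{2}\Phi_{D}(g)\). The same computation gives \(\left\Vert\rho(s_{o})\xi-\xi\right\Vert_{2}=\infty\), so \(\xi\in\mathcal{D}_{2}(G,\lambda)\setminus\mathcal{D}_{2}(G,\rho)\). To apply Kakutani's criterion, pass to \(\sqrt{\xi}\) and \(\sqrt{\mathbf{1}-\xi}\). The functions \(t\mapsto\sqrt{t}\) and \(t\mapsto\sqrt{1-t}\) are bi-Lipschitz on \([1/2,3/4]\) with explicit constants, so differences of \(\xi\) are comparable to differences of its square roots. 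Hence \(\sqrt{\xi}\) and \(\sqrt{\mathbf{1}-\xi}\) lie in \(\mathcal{D}_{2}(G,\lambda)\), which gives left nonsingularity. Also \(\sqrt{\xi}\notin\mathcal{D}_{2}(G,\rho)\), so \(R(s_{o})_{\ast}\mu_{\xi}\) is not equivalent to \(\mu_{\xi}\), and by Kakutani's dichotomy it is singular. This proves (2).

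For conservativity, apply \cref{prop:VaWa} with \(\delta=1/4\). It requires some \(\kappa>\delta^{-2}+\delta^{-1}(1-\delta)^{-2}\), a fixed number \(K_{\delta}\), such that \(\sum_{g}\exp\left(-\kappa\left\Vert\lambda(g)\xi-\xi\right\Vert_{2}^{2}\right)=\infty\). Using the bound above, each term satisfies
\[\exp\left(-\kappa\left\Vert\lambda(g)\xi-\xi\right\Vert_{2}^{2}\right)\geq\exp\left(-\kappa c^{2}\Phi_{D}(g)\right).\]
Choose \(c\) so that \(\kappa c^{2}\leq\kappa_{0}\), say \(c^{2}=\kappa_{0}/\kappa\) while keeping \(c\leq1/4\); this is always possible since we can also take \(\kappa\) larger. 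The divergence then follows from recurrence, and the left shift is conservative. Weak mixing follows from Danilenko's theorem, since \(G\) is amenable, \(\inf\xi\geq1/2>0\), and the action is conservative. This proves (1).

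The main obstacle is this scaling step. The conservativity criterion needs a fixed \(\kappa\) dictated by \(\delta\), while recurrence only provides some \(\kappa_{0}\). The issue is resolved because \(\delta\) stays fixed under shrinking \(c\), whereas the Dirichlet energies scale by \(c^{2}\). The point to check is that \cref{prop:VaWa} is applied to \(\xi\) itself and not to \(\sqrt{\xi}\); if the norm there is meant for the square roots instead, the bi-Lipschitz comparison absorbs the constant into \(c\) in the same way.
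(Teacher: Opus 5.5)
Your proposal is correct and takes essentially the same route as the paper. You refine to the disjoint scheme via \cref{lem:leftsch}, set $\xi=\tfrac12\mathbf{1}+c\,\eta$, and use the Kakutani/bi-Lipschitz comparison for left nonsingularity and singularity of $R(s_o)$; conservativity then comes from \cref{prop:VaWa} by shrinking $c$ so that the Vaes--Wahl constant times $c^2$ is at most the recurrence parameter, and weak mixing from Danilenko's theorem. The differences are cosmetic: the paper fixes $\delta=1/3$ and the constant $16>63/4$ with $\epsilon<\min\{\sqrt{\kappa}/4,1/6\}$, whereas you take $\delta=1/4$, choose $\kappa$ first, and then take $c=\min\{1/4,\sqrt{\kappa_0/\kappa}\}$.
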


In order to construct the desired asymmetric Bernoulli schemes, we will verify that \cref{lem:leftsch} holds true also for recurrent left schemes:

\begin{lem}\label{lem:leftschbern}
Suppose \(G\) admits a recurrent \(s_{o}\)-left scheme \(\left(E_{n}\right)_{n\geq1}\). Then \(G\) admits a recurrent \(s_{o}\)-left scheme \(\left(D_{n}\right)_{n\geq1}\) satisfying conditions (1)--(3) of \cref{lem:leftsch}.
\end{lem}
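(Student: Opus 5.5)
The plan is to apply \cref{lem:leftsch} directly to the recurrent \(s_{o}\)-left scheme \(\left(E_{n}\right)_{n\geq1}\), and then check that recurrence passes to the new sequence. Recall that \cref{lem:leftsch} uses only that \(\left(E_{n}\right)_{n\geq1}\) is an \(s_{o}\)-left scheme; it makes no use of amenability or of any further property. It produces the right translates \(D_{n}:=E_{n}h_{n}\) satisfying conditions (1)--(3) of that lemma. In particular, \(\left(D_{n}\right)_{n\geq1}\) is again an \(s_{o}\)-left scheme.

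The key point is condition (3): the identity \(\Phi_{D}\left(g\right)=\Phi_{E}\left(g\right)\) holds for every \(g\in G\), not just finiteness. It comes from \(gD_{n}\triangle D_{n}=\left(gE_{n}\triangle E_{n}\right)h_{n}\), together with the fact that right multiplication preserves cardinalities, so that each summand is unchanged. Recurrence in \cref{dfn:reclftsch} depends on the scheme only through the function \(g\mapsto\Phi(g)\). Hence, taking the same \(\kappa>0\) that witnesses recurrence of \(\left(E_{n}\right)_{n\geq1}\), we get
\[\sum\nolimits_{g\in G}\exp\left(-\kappa\Phi_{D}\left(g\right)\right)=\sum\nolimits_{g\in G}\exp\left(-\kappa\Phi_{E}\left(g\right)\right)=+\infty.\]
So \(\left(D_{n}\right)_{n\geq1}\) is a recurrent \(s_{o}\)-left scheme, and it satisfies (1)--(3) by construction.

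There is essentially no obstacle. The only thing to confirm is that the construction in \cref{lem:leftsch} is by right translates alone, without shrinking or modifying the sets \(E_{n}\). This holds, since the inductive choice of \(h_{n}\) only avoids a finite union of left cosets of the infinite-index centralizer \(C_{G}\left(s_{o}\right)\). That choice is available because \(s_{o}\) has an infinite conjugacy class, which follows from the first implication of \cref{prop:amenleft} without amenability.
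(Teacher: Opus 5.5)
Your proposal is correct and follows the paper's proof exactly: apply \cref{lem:leftsch} to the underlying \(s_{o}\)-left scheme, then use the identity \(\Phi_{D}=\Phi_{E}\) from condition (3) to transfer recurrence with the same parameter \(\kappa\). Your extra remarks, that the construction uses right translates only and that no amenability is needed, are accurate and match what the paper's proof of \cref{lem:leftsch} already establishes.
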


\begin{proof}[Proof of \cref{lem:leftschbern}]
Apply \cref{lem:leftsch} to the underlying \(s_{o}\)-left scheme \(\left(E_{n}\right)_{n\geq1}\), and let \(\left(D_{n}\right)_{n\geq1}\) be the resulting refinement. For \(g\in G\), denote by \(\Phi_{E}\left(g\right)\) and \(\Phi_{D}\left(g\right)\) the functions associated with \(\left(E_{n}\right)_{n\geq 1}\) and \(\left(D_{n}\right)_{n\geq 1}\), and by \cref{lem:leftsch}(3) we have \(\Phi_{D}\left(g\right)=\Phi_{E}\left(g\right)<+\infty\). Then since \(\left(E_{n}\right)_{n\geq1}\) is recurrent, say with parameter \(\kappa>0\), it readily follows that \(\left(D_{n}\right)_{n\geq 1}\) is recurrent with the same parameter \(\kappa\).
\end{proof}

\begin{proof}[Proof of \cref{prop:leftschbern}]
Let \(\left(D_{n}\right)_{n\geq1}\) be the recurrent \(s_{o}\)-left scheme supplied by \cref{lem:leftschbern} with recurrence parameter \(\kappa>0\). Fix \(0<\epsilon<\min\left\{\sqrt{\kappa}/4,1/6\right\}\), and define a \(\left(0,1\right)\)-valued vector by
\[\xi\in\mathbb{R}^{G},\quad\xi\left(h\right):=\frac{1}{2}+\epsilon\cdot\sum\nolimits_{n\geq1}\frac{1}{\sqrt{\left|D_{n}\right|}}\cdot\mathbf{1}_{D_{n}}\left(h\right),\quad h\in G.\]
This \(\xi\) is just an affine rescaling of the vector defined in the proof of \cref{prop:leftschvec}, and therefore \(\xi\in\mathcal{D}_{2}\left(G,\lambda\right)\setminus\mathcal{D}_{2}\left(G,\rho\right)\). Let us explain how Kakutani's criterion gives that the resulting Bernoulli scheme \(\left(\Omega_{G},\mu_{\xi}\right)\) is left nonsingular but not right nonsingular. By condition~(1) in \cref{lem:leftsch}, for every \(h\in G\) there is at most one \(n\) with \(h\in D_{n}\), and hence by the choice of \(\epsilon\),
\[\frac{1}{2}\leq\xi\left(h\right)\leq\frac{1}{2}+\epsilon<\frac{2}{3}.\]
As was already observed by Kakutani~\cite[Cor.~1]{kakutani1948equivalence}, if \(\eta\in\mathbb{R}^{G}\) satisfies \(0<\inf\eta\leq\sup\eta<1\) then \(\eta\in\mathcal{D}_{2}\left(G,\lambda\right)\iff\sqrt{\eta}\in\mathcal{D}_{2}\left(G,\lambda\right)\), and the same holds with \(\mathcal{D}_{2}\left(G,\rho\right)\). Therefore, one deduces that \(\sqrt{\xi},\sqrt{\mathbf{1}-\xi}\in\mathcal{D}_{2}\left(G,\lambda\right)\) and so the left shift is nonsingular. Similarly, one deduces that \(\sqrt{\xi}\notin\mathcal{D}_{2}\left(G,\rho\right)\) and so the right shift by \(s_{o}\) is singular.

\smallskip

We next prove the conservativity. Fix \(g\in G\). For every \(h\in G\) one has
\[\left(\lambda\left(g\right)\xi-\xi\right)\left(h\right)=\epsilon\cdot\sum\nolimits_{n\geq1}\frac{1}{\sqrt{\left|D_{n}\right|}}\left(\mathbf{1}_{gD_{n}}\left(h\right)-\mathbf{1}_{D_{n}}\left(h\right)\right).\]
Since \(D_{n}\), \(n\geq1\), are pairwise disjoint, so are \(gD_{n}\), \(n\geq1\). Therefore, for every \(h\in G\) at most one of  \(\mathbf{1}_{gD_{n}}\left(h\right)\) is nonzero, and at most one of \(\mathbf{1}_{D_{n}}\left(h\right)\) is nonzero. It follows that
\[\Big|\sum\nolimits_{n\geq1}\frac{1}{\sqrt{\left|D_{n}\right|}}\left(\mathbf{1}_{gD_{n}}\left(h\right)-\mathbf{1}_{D_{n}}\left(h\right)\right)\Big|^{2}\leq\sum\nolimits_{n\geq1}\frac{1}{\left|D_{n}\right|}\mathbf{1}_{gD_{n}\triangle D_{n}}\left(h\right).\]
Then since \(\Phi_{D}\left(g\right)<+\infty\), summing over \(h\in G\) gives
\[\big\Vert\lambda\left(g\right)\xi-\xi\big\Vert_{\ell^{2}\left(G\right)}^{2}\leq\epsilon^{2}\cdot\sum\nolimits_{n\geq1}\frac{\left|gD_{n}\triangle D_{n}\right|}{\left|D_{n}\right|}=\epsilon^{2}\Phi_{D}\left(g\right)<+\infty.\]
By the choice \(0<\epsilon<\sqrt{\kappa}/4\) we have \(16\epsilon^{2}<\kappa\), hence
\[16\big\Vert\lambda\left(g\right)\xi-\xi\big\Vert_{\ell^{2}\left(G\right)}^{2}\leq\kappa\Phi_{D}\left(g\right),\]
and we obtain
\[\exp\big(-16\big\Vert\lambda\left(g\right)\xi-\xi\big\Vert_{\ell^{2}\left(G\right)}^{2}\big)\geq\exp\left(-\kappa\Phi_{D}\left(g\right)\right).\]
It follows by recurrence that
\[\sum\nolimits_{g\in G}\exp\big(-16\big\Vert\lambda\left(g\right)\xi-\xi\big\Vert_{\ell^{2}\left(G\right)}^{2}\big)\geq\sum\nolimits_{g\in G}\exp\left(-\kappa\Phi_{D}\left(g\right)\right)=+\infty.\]
Put \(\delta:=1/3\) and thus \(\delta\leq\inf\xi\leq\sup\xi\leq1-\delta\). Since \(\delta^{-2}+\delta^{-1}\left(1-\delta\right)^{-2}=63/4<16\), by \cref{prop:VaWa} the left nonsingular Bernoulli shift \(G\curvearrowright\left(\Omega_{G},\mu_{\xi}\right)\) is conservative. Since \(G\) is amenable, by \cite[Thm.~0.2]{danilenko2019weak} the left nonsingular Bernoulli shift \(G\curvearrowright\left(\Omega_{G},\mu_{\xi}\right)\) is further weakly mixing.
\end{proof}

\subsection{Constructing recurrent left schemes}\label{sct:recleftconst}

The following proposition, along with \cref{prop:leftschbern}, is the main tool we need for \cref{mthm3}.

\begin{prop}\label{prop:inficc}
Let \(G\) be a countable amenable group and let \(s_{o}\in G\) have an infinite conjugacy class. Then \(G\) admits a recurrent \(s_{o}\)-left scheme.
\end{prop}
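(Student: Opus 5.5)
The plan is to rerun the construction from the proof of \cref{prop:amenleft}, now choosing the exhaustion to grow fast enough that recurrence comes out of crude counting. The key observation is that recurrence does not need small values of \(\Phi_{E}\). It is enough that the number of \(g\) with \(\Phi_{E}(g)\) of order \(N\) grows faster than \(e^{\kappa\cdot O(N)}\). The size of the exhaustion is under our control, so we can arrange this.

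\emph{Step 1 (choice of exhaustion).} Since \(G\) is countably infinite, choose finite sets \(K_{1}\subseteq K_{2}\subseteq\dotsm\) exhausting \(G\) such that
\[\left|K_{N}\setminus K_{N-1}\right|\geq e^{4N}\quad\text{for every }N\geq1,\]
with \(K_{0}:=\emptyset\). This is possible because at each stage we may add finitely many new elements, namely the next element of a fixed enumeration of \(G\) together with enough arbitrary fresh elements.

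\emph{Step 2 (the scheme).} As in \cref{prop:amenleft}, use the F{\o}lner property to pick finite sets \(F_{n}\) with
\[\max_{g\in K_{n}}\frac{\left|gF_{n}\triangle F_{n}\right|}{\left|F_{n}\right|}\leq\frac{1}{n^{2}}.\]
Since \(s_{o}^{G}\) is infinite, pick \(g_{n}\) with \(g_{n}s_{o}g_{n}^{-1}\notin F_{n}^{-1}F_{n}\), and set \(E_{n}:=F_{n}g_{n}\). Exactly as in the proof of \cref{prop:amenleft}, \(\left(E_{n}\right)_{n\geq1}\) is an \(s_{o}\)-left scheme. Moreover the ratios are unchanged:
\[\frac{\left|gE_{n}\triangle E_{n}\right|}{\left|E_{n}\right|}=\frac{\left|gF_{n}\triangle F_{n}\right|}{\left|F_{n}\right|}.\]

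\emph{Step 3 (a linear bound on \(\Phi_{E}\)).} For \(g\in K_{N}\setminus K_{N-1}\) we have \(g\in K_{n}\) for all \(n\geq N\). Using the trivial bound \(\left|gE_{n}\triangle E_{n}\right|/\left|E_{n}\right|\leq2\) for the terms with \(n<N\), we get
\[\Phi_{E}(g)\leq2(N-1)+\sum\nolimits_{n\geq N}\frac{1}{n^{2}}\leq2N+2.\]

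\emph{Step 4 (divergence).} Take \(\kappa=1\). Then
\[\sum\nolimits_{g\in G}e^{-\Phi_{E}(g)}\geq\sum\nolimits_{N\geq1}\left|K_{N}\setminus K_{N-1}\right|e^{-(2N+2)}\geq\sum\nolimits_{N\geq1}e^{2N-2}=+\infty,\]
so the scheme is recurrent in the sense of \cref{dfn:reclftsch}.

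The only point where any care is needed is Step 1, namely noticing that the exhaustion can be made to grow exponentially regardless of the geometry of \(G\). Nothing else in the argument depends on \(\left(K_{n}\right)_{n\geq1}\) beyond its being an exhaustion by finite sets, so I do not expect a genuine obstacle. If one instead wanted recurrence for a prescribed small \(\kappa\), the same argument works with the growth condition replaced by \(\left|K_{N}\setminus K_{N-1}\right|\geq e^{3\kappa N}\).
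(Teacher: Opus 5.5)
Your proof is correct and is essentially the paper's argument: the same right-translated F{\o}lner sets $E_n=F_ng_n$, with recurrence coming from an exponentially growing exhaustion. Bounding the terms with $n<N$ by $2$ then makes $\Phi_E$ grow only logarithmically in the enumeration index. The paper packages this as $K_n=\{g_1^{\pm1},\dots,g_{\lfloor q^n\rfloor}^{\pm1}\}$ with tolerance $q^{-n}$ and $\kappa=\frac12\log q$, ending with a harmonic series, whereas you count shells of size at least $e^{4N}$ with $\kappa=1$. In your closing remark, ``small'' should read ``large'', since divergence for some $\kappa$ already gives divergence for every smaller one.
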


\begin{proof}[Proof of \cref{prop:inficc}]
Fix \(q>1\). Enumerate \(G=\left\{g_{1},g_{2},\dotsc\right\}\) with \(g_{1}=e_{G}\). For \(n\geq1\) define
\[K_{n}:=\big\{g_{1}^{\pm1},\dotsc,g_{\left\lfloor q^{n}\right\rfloor }^{\pm1}\big\}.\]
Then \(\left(K_{n}\right)_{n\geq1}\) is an exhaustion of \(G\) by finite symmetric sets. Since \(G\) is amenable, for every \(n\geq1\) we may choose a nonempty finite set \(F_{n}\subset G\) such that
\[\max_{g\in K_{n}}\frac{\left|gF_{n}\triangle F_{n}\right|}{\left|F_{n}\right|}\leq q^{-n}.\]
For every \(n\geq 1\), using that \(F_{n}^{-1}F_{n}\) is finite while \(s_{o}\) has infinite conjugacy class, choose \(h_{n}\in G\) with \(h_{n}s_{o}h_{n}^{-1}\notin F_{n}^{-1}F_{n}\). Define \(\left(E_{n}\right)_{n\geq 1}\) by \(E_{n}:=F_{n}h_{n}\), and we verify that it is a recurrent \(s_{o}\)-left scheme:
\begin{enumerate}
    \item For \(n\geq 1\), if \(E_{n}s_{o}\cap E_{n}\neq\emptyset\), then there are \(h,h'\in F_{n}\) such that \(hh_{n}s_{o}=h'h_{n}\), hence \(h_{n}s_{o}h_{n}^{-1}=h^{-1}h'\in F_{n}^{-1}F_{n}\), contrary to the choice of \(h_{n}\). Thus \(E_{n}s_{o}\cap E_{n}=\emptyset\) for every \(n\geq1\).
    \item Fix \(g\in G\), and let \(N\geq1\) be such that \(g\in K_{N}\). Since \(E_{n}=F_{n}h_{n}\) and \(gE_{n}\triangle E_{n}=\left(gF_{n}\triangle F_{n}\right)h_{n}\),
    \[\frac{\left|gE_{n}\triangle E_{n}\right|}{\left|E_{n}\right|}=\frac{\left|gF_{n}\triangle F_{n}\right|}{\left|F_{n}\right|}\quad\text{for every }n\geq1.\]
    Therefore,
    \[\Phi_{E}\left(g\right)=\sum\nolimits_{n\geq1}\frac{\left|gE_{n}\triangle E_{n}\right|}{\left|E_{n}\right|}\leq\sum\nolimits_{n<N}\frac{\left|gE_{n}\triangle E_{n}\right|}{\left|E_{n}\right|}+\sum\nolimits_{n\geq N}q^{-n}<+\infty.\]
    \item We verify recurrence. For \(j\geq2\), by the definition of the \(K_{n}\)'s we have
    \[N_{j}:=\min\left\{n\geq1:g_{j}\in K_{n}\right\}\leq\left\lceil\log_{q}j\right\rceil,\]
    and hence \(N_{j}-1\leq\log_{q}j\). We then claim that
    \[\Phi_{E}\left(g_{j}\right)\leq2\left(N_{j}-1\right)+\sum\nolimits_{n\geq N_{j}}q^{-n}\leq2\log_{q}j+\frac{q}{q-1};\]
    indeed, for \(n<N_{j}\) we used that the general term of \(\Phi_{E}\left(g\right)\) is trivially bounded by \(2\), and for \(n\geq N_{j}\), since in this case \(g_{j}\in K_{n}\), we have
    \[\frac{\left|g_{j}E_{n}\triangle E_{n}\right|}{\left|E_{n}\right|}=\frac{\left|g_{j}F_{n}\triangle F_{n}\right|}{\left|F_{n}\right|}\leq q^{-n}.\]
    Set \(\kappa:=\frac{1}{2}\log q\). Then \(2\kappa\log_{q}j=\log j\), and therefore
    \[\exp\left(-\kappa\Phi_{E}\left(g_{j}\right)\right)\geq\frac{1}{j}\cdot\exp\Big(-\frac{\kappa q}{q-1}\Big)\quad\text{for every }j\geq2.\]
    We conclude that
    \[\sum\nolimits_{g\in G}\exp\left(-\kappa\Phi_{E}\left(g\right)\right)=\sum\nolimits_{j\geq1}\exp\left(-\kappa\Phi_{E}\left(g_{j}\right)\right)\geq\exp\Big(-\frac{\kappa q}{q-1}\Big)\cdot\sum\nolimits_{j\geq1}\frac{1}{j}=+\infty.\qedhere\]
\end{enumerate}
\end{proof}

\section{Geometric realizations of left schemes}\label{sct:geomleftsch}

We now give concrete geometric realizations of recurrent left schemes, where the geometry of the groups can be used to produce such constructions explicitly. We start with the Heisenberg group, where the displacement comes from a drift created by commutators, and then consider wreath products \(A\wr\mathbb{Z}\) for amenable \(A\), where the displacement comes from the shift. In an earlier arXiv version of this paper, we developed these concrete constructions for nilpotent groups in greater detail using Malcev theory. We omit them here, as the general construction in \cref{prop:amenleft,prop:inficc} supersedes them.

\subsection{The Heisenberg group}\label{sct:Heis}

The Heisenberg group gives a concrete model of skewed F{\o}lner geometry. Consider the Heisenberg group
\[H_{3}\left(\mathbb{Z}\right)=\left\langle x,y,z:\left[y,x\right]=z,\left[z,x\right]=\left[z,y\right]=e_{H}\right\rangle,\]
where commutators are taken with the convention \(\left[g,h\right]=g^{-1}h^{-1}gh\). Thus the multiplication rule is
\[x^{a}y^{b}z^{c}\cdot x^{a'}y^{b'}z^{c'}=x^{a+a'}y^{b+b'}z^{c+c'+ba'},\quad a,b,c,a',b',c'\in\mathbb{Z}.\]
We work with the finite symmetric generating set
\[S:=\left\{x,x^{-1},y,y^{-1}\right\}.\]
Let us construct a recurrent \(x\)-left scheme for \(H_{3}\left(\mathbb{Z}\right)\).

\smallskip

Choose positive integer sequences \(A_{n},B_{n},C_{n}\) such that
\[\sum\nolimits_{n\geq1}\frac{1}{A_{n}}<+\infty,\quad\sum\nolimits_{n\geq1}\frac{1}{B_{n}}<+\infty,\quad\sum\nolimits_{n\geq1}\frac{A_{n}}{C_{n}}<+\infty,\quad A_{n}\geq 2^{n}.\]
For instance, \(A_{n}=2^{n}\), \(B_{n}=n^{2}\), and \(C_{n}=4^{n}\). Define an integer sequence \(\left(b_{n}\right)_{n\geq1}\) inductively by
\[b_{1}:=C_{1}+1,\quad b_{n+1}:=\max\left\{C_{n+1}+1,\,b_{n}+B_{n}+1\right\},\]
and put
\[I_{n}:=\left[b_{n},b_{n}+B_{n}\right)\cap\mathbb{Z}.\]
This choice is designed to have the properties
\[b_{n}>C_{n}\quad\text{and}\quad I_{n}\cap I_{m}=\emptyset\quad\text{for all }n\neq m.\]
Define
\[E_{n}:=\left\{x^{a}y^{b}z^{c}:0\leq a<A_{n},\ b\in I_{n},\ 0\leq c<C_{n}\right\}.\]
We will show that \(\left(E_{n}\right)_{n\geq1}\) is a recurrent \(x\)-left scheme.

\begin{enumerate}
    \item First we verify the displacement condition. Right multiplication by \(x\) gives
    \[x^{a}y^{b}z^{c}x=x^{a+1}y^{b}z^{c+b}.\]
    For \(x^{a}y^{b}z^{c}\in E_{n}\), since \(b\geq b_{n}>C_{n}\) we have \(c+b\geq b_{n}>C_{n}\). It follows that the new \(z\)-coordinate cannot lie in \(\left[0,C_{n}\right)\), and so \(E_{n}x\cap E_{n}=\emptyset\).
    \item We next estimate left boundaries. For \(x\) we have
    \[\frac{\left|xE_{n}\triangle E_{n}\right|}{\left|E_{n}\right|}=\frac{2}{A_{n}},\]
    and the same estimate holds for \(x^{-1}\). For \(y\) we use
    \[yx^{a}y^{b}z^{c}=x^{a}y^{b+1}z^{c+a}.\]
    Since \(\left|yE_{n}\setminus E_{n}\right|=\left|E_{n}\setminus yE_{n}\right|\), it suffices to estimate \(\left|E_{n}\setminus yE_{n}\right|\). If \(g=x^{a}y^{b}z^{c}\in E_{n}\setminus yE_{n}\), then
    \[y^{-1}g=x^{a}y^{b-1}z^{c-a}.\]
    Thus either \(b-1\notin I_{n}\) or \(c-a\notin\left[0,C_{n}\right)\). Since \(b\in I_{n}\) and \(0\leq c<C_{n}\), either \(b=b_{n}\) or \(c<a\). Therefore,
    \[\left|E_{n}\setminus yE_{n}\right|\leq A_{n}C_{n}+B_{n}\sum\nolimits_{a=0}^{A_{n}-1}\min\left\{a,C_{n}\right\}\leq A_{n}C_{n}+\frac{1}{2}A_{n}^{2}B_{n}.\]
    As \(\left|E_{n}\right|=A_{n}B_{n}C_{n}\), we get
    \[\frac{\left|yE_{n}\triangle E_{n}\right|}{\left|E_{n}\right|}\leq\frac{2}{B_{n}}+\frac{A_{n}}{C_{n}}.\]
    The same estimate holds for \(y^{-1}\). By the choice of \(A_{n},B_{n},C_{n}\), we conclude that
    \[\Phi_{E}\left(s\right)=\sum\nolimits_{n\geq1}\frac{\left|sE_{n}\triangle E_{n}\right|}{\left|E_{n}\right|}<+\infty\quad\text{for every }s\in S.\]
    In view of \cref{rem:summ}, \(\Phi_{E}\left(g\right)<+\infty\) for all \(g\in H_{3}\left(\mathbb{Z}\right)\).
    \item We verify recurrence. For \(k\in\mathbb{Z}\), left multiplication by \(x^{k}\) gives
    \[x^{k}E_{n}=\left\{x^{a}y^{b}z^{c}:k\leq a<A_{n}+k,\ b\in I_{n},\ 0\leq c<C_{n}\right\},\]
    and therefore
    \[\left|x^{k}E_{n}\triangle E_{n}\right|=2\min\left\{\left|k\right|,A_{n}\right\}B_{n}C_{n}.\]
    Using that \(\left|E_{n}\right|=A_{n}B_{n}C_{n}\), it follows that
    \[\frac{\left|x^{k}E_{n}\triangle E_{n}\right|}{\left|E_{n}\right|}=2\min\Big\{\frac{\left|k\right|}{A_{n}},1\Big\}.\]
    Since \(A_{n}\geq 2^{n}\), the same estimate as in the proof of \cref{prop:inficc} with \(q=2\) gives
    \[\Phi_{E}\left(x^{k}\right)\leq2\log_{2}\left|k\right|+4\quad\text{for every }\left|k\right|\geq1.\]
    Letting \(\kappa:=\log\sqrt{2}\), we get
    \[\sum\nolimits_{g\in H_{3}\left(\mathbb{Z}\right)}\exp\left(-\kappa\Phi_{E}\left(g\right)\right)\geq\sum\nolimits_{k\geq1}\frac{1}{4k}=+\infty.\]
\end{enumerate}

\subsection{Amenable wreath products}\label{sct:wreathscheme}

Let \(A\) be a nontrivial countable amenable group (possibly finite). Consider the wreath product
\[A\wr\mathbb{Z}=H\rtimes_{\phi}\left\langle s_{o}\right\rangle,\quad H:=\bigoplus\nolimits_{\mathbb{Z}}A,\]
where \(s_{o}\) has infinite order and \(\phi\) is the shift automorphism given by
\[\phi:\big(\left(a_{j}\right)_{j\in\mathbb{Z}}\big)\mapsto\left(a_{j+1}\right)_{j\in\mathbb{Z}}.\]
Every element has a unique normal form \(\eta s_{o}^{k}\), with \(\eta\in H\) and \(k\in\mathbb{Z}\), and the multiplication is given by
\[(\eta s_{o}^{k})(\eta' s_{o}^{k'})=\eta\phi^{k}\left(\eta'\right)s_{o}^{k+k'}.\]
For \(a\in A\) and \(i\in\mathbb{Z}\), let \(\delta_{i}\left(a\right)\in H\) denote the configuration which is equal to \(a\) at \(i\) and to \(e_{A}\) elsewhere. With the above convention one has \(\phi^{k}\left(\delta_{i}\left(a\right)\right)=\delta_{i-k}\left(a\right)\).

\smallskip

Let \(Q_{1}\subseteq Q_{2}\subseteq\dotsm\) be an exhaustion of \(A\) by finite symmetric sets, and using the amenability of \(A\), for every \(n\geq1\) we may choose a finite nonempty set \(K_{n}\subset A\) such that
\[\max_{q\in Q_{n}}\frac{\left|qK_{n}\triangle K_{n}\right|}{\left|K_{n}\right|}\leq\frac{1}{n^{2}}.\]
Put \(L_{n}:=2^{n}\) and fix \(e_{A}\neq a_{o}\in A\). Define blocks with \(L_{n}\) free coordinates and two fixed lamps \(a_{o}\) at the endpoints:
\[R_{n}:=\left\{\eta\in H:\operatorname{supp}\left(\eta\right)\subseteq\left[-1,L_{n}\right],\,\eta\left(-1\right)=\eta\left(L_{n}\right)=a_{o},\,\eta\left(l\right)\in K_{n}\text{ for }0\leq l<L_{n}\right\}.\]
One can verify that the sets \(\phi^{m}\left(R_{n}\right)\), \(n\geq1\), \(m\in\mathbb{Z}\), are pairwise disjoint. Define
\[E_{n}:=\bigsqcup\nolimits_{l=0}^{L_{n}-1}\phi^{l}\left(R_{n}\right)s_{o}^{l},\quad n\geq 1,\]
and we will show that \(\left(E_{n}\right)_{n\geq1}\) forms a recurrent \(s_{o}\)-left scheme.
\begin{enumerate}
    \item We verify the displacement condition. Right multiplication by \(s_{o}\) gives
    \[E_{n}s_{o}=\bigsqcup\nolimits_{l=0}^{L_{n}-1}\phi^{l}\left(R_{n}\right)s_{o}^{l+1}.\]
    The only possible common \(s_{o}\)-powers of \(E_{n}s_{o}\) and \(E_{n}\) are \(1,\dotsc,L_{n}-1\). For \(1\leq l\leq L_{n}-1\), the \(H\)-coordinate coming from \(E_{n}s_{o}\) lies in \(\phi^{l-1}\left(R_{n}\right)\), while the \(H\)-coordinate coming from \(E_{n}\) lies in \(\phi^{l}\left(R_{n}\right)\). Since these sets are disjoint, \(E_{n}s_{o}\cap E_{n}=\emptyset\).
    \item The group \(A\wr\mathbb{Z}\) is generated by the symmetric set
    \[S:=\left\{s_{o}^{\pm1}\right\}\cup\left\{\delta_{0}\left(a\right):a\in A\right\}.\]
    In view of \cref{rem:summ}, it suffices to show that \(\Phi_{E}\left(s\right)<+\infty\) for every \(s\in S\).
    \begin{itemize}
        \item We check \(s_{o}^{\pm 1}\). Left multiplication by \(s_{o}\) gives
        \[s_{o}E_{n}=\bigsqcup\nolimits_{l=0}^{L_{n}-1}\phi^{l+1}\left(R_{n}\right)s_{o}^{l+1}.\]
        Then the only levels in \(s_{o}E_{n}\triangle E_{n}\) are the first level of \(E_{n}\) and the last level of \(s_{o}E_{n}\). Each of these levels has cardinality \(\left|R_{n}\right|\), and since \(\left|E_{n}\right|=L_{n}\left|R_{n}\right|\) we get
        \[\frac{\left|s_{o}E_{n}\triangle E_{n}\right|}{\left|E_{n}\right|}=\frac{2}{L_{n}}=\frac{1}{2^{n-1}}.\]
        Therefore \(\Phi_{E}\left(s_{o}\right)<+\infty\). The same estimate holds for \(s_{o}^{-1}\), and so also \(\Phi_{E}\left(s_{o}^{-1}\right)<+\infty\).
        \item We check \(\delta_{0}\left(a\right)\) for \(a\in A\). Fix \(a\in A\). For \(0\leq l\leq L_{n}-1\), left multiplication by \(\delta_{0}\left(a\right)\) gives
        \[\delta_{0}\left(a\right)\phi^{l}\left(\eta\right)s_{o}^{l}=\phi^{l}\left(\phi^{-l}\left(\delta_{0}\left(a\right)\right)\eta\right)s_{o}^{l}=\phi^{l}\left(\delta_{l}\left(a\right)\eta\right)s_{o}^{l}.\]
        Then the set \(\delta_{l}\left(a\right)R_{n}\) is defined the same as \(R_{n}\) upon replacing \(K_{n}\) by \(aK_{n}\). We then obtain
        \[\left|\delta_{l}\left(a\right)R_{n}\triangle R_{n}\right|=\left|aK_{n}\triangle K_{n}\right|\left|K_{n}\right|^{L_{n}-1}.\]
        Since the different \(s_{o}\)-levels are disjoint, when summing over \(l=0,\dotsc,L_{n}-1\) and using that \(\left|E_{n}\right|=L_{n}\left|R_{n}\right|=L_{n}\left|K_{n}\right|^{L_{n}}\), we obtain
        \[\frac{\left|\delta_{0}\left(a\right)E_{n}\triangle E_{n}\right|}{\left|E_{n}\right|}=\frac{\left|aK_{n}\triangle K_{n}\right|}{\left|K_{n}\right|}.\]
        Choosing \(N\geq1\) such that \(a\in Q_{n}\) for all \(n\geq N\), we obtain
        \[\Phi_{E}\left(\delta_{0}\left(a\right)\right)\leq\sum\nolimits_{n<N}\frac{\left|\delta_{0}\left(a\right)E_{n}\triangle E_{n}\right|}{\left|E_{n}\right|}+\sum\nolimits_{n\geq N}\frac{1}{n^{2}}<+\infty.\]
    \end{itemize}
    \item We verify recurrence. For \(k\in\mathbb{Z}\), left multiplication by \(s_{o}^{k}\) gives
    \[s_{o}^{k}E_{n}=\bigsqcup\nolimits_{l=k}^{L_{n}+k-1}\phi^{l}\left(R_{n}\right)s_{o}^{l},\]
    and therefore
    \[\left|s_{o}^{k}E_{n}\triangle E_{n}\right|=2\min\left\{\left|k\right|,L_{n}\right\}\left|R_{n}\right|.\]
    Using that \(\left|E_{n}\right|=L_{n}\left|R_{n}\right|\), it follows that
    \[\frac{\left|s_{o}^{k}E_{n}\triangle E_{n}\right|}{\left|E_{n}\right|}=2\min\Big\{\frac{\left|k\right|}{L_{n}},1\Big\}.\]
    Since \(L_{n}=2^{n}\), the same estimate as in the proof of \cref{prop:inficc} with \(q=2\) gives
    \[\Phi_{E}\left(s_{o}^{k}\right)\leq2\log_{2}\left|k\right|+4\quad\text{for every }\left|k\right|\geq1.\]
    Letting \(\kappa:=\log\sqrt{2}\), we get
    \[\sum\nolimits_{g\in A\wr\mathbb{Z}}\exp\left(-\kappa\Phi_{E}\left(g\right)\right)\geq\sum\nolimits_{k\geq1}\frac{1}{4k}=+\infty.\]
\end{enumerate}

\section{Assembly of the proofs}\label{sct:finproofs}

Let us summarize the final proofs of the main theorems.

\begin{proof}[Proof of \cref{mthm1}]
One implication is by \cref{thm:nonamenable}. The other implication is by \cref{prop:symmetcoc}.
\end{proof}

\begin{proof}[Proof of \cref{cor:dense}]
Let \(\eta\in\mathcal{D}_{2}\left(G,\lambda\right)\) be arbitrary. If \(\eta\notin\mathcal{D}_{2}\left(G,\rho\right)\) there is nothing to prove, so assume that \(\eta\in\mathcal{D}_{2}\left(G,\rho\right)\). Fix \(\xi\in\mathcal{D}_{2}\left(G,\lambda\right)\setminus\mathcal{D}_{2}\left(G,\rho\right)\), which exists by \cref{mthm2} because \(G\) is not an FC-group, and define
\[\eta_{t}:=\eta+t\xi\in\mathcal{D}_{2}\left(G,\lambda\right),\quad t>0.\]
If \(\eta_{t}\in\mathcal{D}_{2}\left(G,\rho\right)\) for some \(t>0\), then \(\xi=\left(\eta_{t}-\eta\right)/t\in\mathcal{D}_{2}\left(G,\rho\right)\), contrary to the choice of \(\xi\). Therefore,
\[\eta_{t}\in\mathcal{D}_{2}\left(G,\lambda\right)\setminus\mathcal{D}_{2}\left(G,\rho\right)\quad\text{for every }t>0.\]
On the other hand, for every \(g\in G\) we have
\[\left\Vert\lambda\left(g\right)\left(\eta_{t}-\eta\right)-\left(\eta_{t}-\eta\right)\right\Vert_{\ell^{2}\left(G\right)}=t\left\Vert\lambda\left(g\right)\xi-\xi\right\Vert_{\ell^{2}\left(G\right)}\to0\quad\text{as }t\searrow0.\qedhere\]
\end{proof}

\begin{proof}[Proof of \cref{mthm2}]
If \(G\) is an FC-group, then \(\mathcal{D}_{2}\left(G,\lambda\right)=\mathcal{D}_{2}\left(G,\rho\right)\) by \cref{prop:FC}. Conversely, assume that \(G\) is not an FC-group. Since \(G\) is countably infinite amenable, \cref{prop:amenleft} gives a left scheme on \(G\). Hence \cref{prop:leftschvec} gives \(\mathcal{D}_{2}\left(G,\lambda\right)\neq\mathcal{D}_{2}\left(G,\rho\right)\).
\end{proof}

\begin{proof}[Proof of \cref{mthm3}]
Because \(G\) is amenable and not an FC-group, by \cref{prop:inficc} it admits a recurrent left scheme. Then the desired Bernoulli scheme is supplied by \cref{prop:leftschbern}.
\end{proof}

Finally, let us explain \cref{rem:partcons}. If \(s_{o}\) has infinite order and infinite conjugacy class, then in the construction of \cref{prop:inficc} we may choose the sets \(K_{n}\) so that \(\left\{s_{o},s_{o}^{-1}\right\}\subseteq K_{n}\) for every \(n\geq1\). Then using a telescoping estimate as in \cref{rem:summ} gives
\[\Phi_{E}\left(s_{o}^{k}\right)\leq\sum\nolimits_{n\geq1}\min\left\{2,\left|k\right|q^{-n}\right\}\leq2\log_{q}\left|k\right|+\frac{q}{q-1}\quad\text{for every }\left|k\right|\geq1.\]
Thus, for \(\kappa:=\frac{1}{2}\log q\) and \(\Gamma:=\left\langle s_{o}\right\rangle\cong\mathbb{Z}\),
\[\sum\nolimits_{\gamma\in\Gamma}\exp\left(-\kappa\Phi_{E}\left(\gamma\right)\right)=+\infty.\]
After applying \cref{lem:leftschbern}, we still have \(\Phi_{D}=\Phi_{E}\), so the same estimate holds for the left scheme used to define \(\xi\) in the proof of \cref{prop:leftschbern}. Applying the conservativity argument from \cref{prop:leftschbern} to the restricted left shift \(\Gamma\curvearrowright\left(\Omega_{G},\mu_{\xi}\right)\), we get that the left shift by \(s_{o}\) is conservative. Note that the Vaes--Wahl criterion for conservativity applies to this setup as well (see the formulation in \cite[Prop.~4.1]{vaes2018bernoulli}).

\appendix

\section{Symmetric \texorpdfstring{\(\ell^{2}\)}{l2}-cocycles on nonamenable groups}\label{app:vaes}

The following result with its proof are due to Stefaan Vaes, who kindly allowed us to include it here.

\begin{thm}[S.~Vaes]\label{thm:nonamenable}
Let \(G\) be a countable nonamenable group, and suppose \(\xi:G\to\mathbb{R}\) satisfies
\[\lambda\left(g\right)\xi-\xi\in\ell^{2}\left(G\right)\quad\text{and}\quad\rho\left(g\right)\xi-\xi\in\ell^{2}\left(G\right)\quad\text{ for every }g\in G.\]
Then \(\xi\in\ell^{2}\left(G\right)\oplus\mathbb{R}\mathbf{1}\).
\end{thm}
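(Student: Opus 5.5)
The plan is to pass to the first cohomology of the left regular representation and use the right action to force the class of \(\xi\) to vanish. Nonamenability will enter only through the spectral gap of \(\lambda\).

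Let \(b\left(g\right):=\lambda\left(g\right)\xi-\xi\). This is a \(1\)-cocycle for \(\lambda\) with values in \(\ell^{2}\left(G\right)\). For \(h\in G\) put \(v_{h}:=\rho\left(h\right)\xi-\xi\in\ell^{2}\left(G\right)\). Since \(\rho\) and \(\lambda\) commute on \(\mathbb{R}^{G}\), we get
\[\rho\left(h\right)b\left(g\right)-b\left(g\right)=\lambda\left(g\right)v_{h}-v_{h}\quad\text{for all }g,h\in G.\]
So the class of \(b\) in \(H^{1}\left(G,\lambda\right)\) is invariant under the action of \(\rho\left(G\right)\) on cocycles. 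The goal is to show that a \(\rho\)-invariant class must be zero. It then follows that \(b\left(g\right)=\lambda\left(g\right)v-v\) for some \(v\in\ell^{2}\left(G\right)\). Hence \(\xi-v\) is \(\lambda\)-invariant, i.e. constant, and \(\xi\in\ell^{2}\left(G\right)\oplus\mathbb{R}\mathbf{1}\).

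To make this rigorous, enumerate \(G=\{g_{1},g_{2},\dotsc\}\). Since \(b\left(g_{n}\right)\) may be large in norm, choose weights \(c_{n}>0\) with \(\sum_{n}c_{n}^{2}\Vert b\left(g_{n}\right)\Vert^{2}<\infty\). Also choose them with \(\sum_{n}c_{n}^{2}\Vert\lambda\left(g_{n}\right)v_{h}-v_{h}\Vert^{2}\leq\sum_{n}4c_{n}^{2}\Vert v_{h}\Vert^{2}<\infty\), which holds if \(\sum c_{n}^{2}<\infty\). Then
\[T\left(b\right):=\left(c_{n}b\left(g_{n}\right)\right)_{n\geq1}\in\mathcal{H}:=\ell^{2}\left(G\right)\otimes\ell^{2}\left(\mathbb{N}\right).\]
On \(\mathcal{H}\), consider the unitary representation \(\rho\otimes1\), which is a multiple of the right regular representation.

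Consider also the map \(\partial:\ell^{2}\left(G\right)\to\mathcal{H}\), \(\partial v:=\left(c_{n}\left(\lambda\left(g_{n}\right)v-v\right)\right)_{n}\). It is bounded and intertwines \(\rho\) with \(\rho\otimes1\), since \(\rho\) commutes with \(\lambda\). By the displayed identity, \(\left(\rho\otimes1\right)\left(h\right)T\left(b\right)-T\left(b\right)=\partial v_{h}\in\operatorname{ran}\partial\).

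The key step, which I expect to be the main point, is that \(\operatorname{ran}\partial\) is closed. This is exactly where nonamenability is used. By Kesten/Hulanicki, \(\lambda\) has no almost invariant vectors, so there are a finite set \(\{g_{n}:n\leq N\}\) and \(C>0\) with \(\Vert v\Vert\leq C\sum_{n\leq N}\Vert\lambda\left(g_{n}\right)v-v\Vert\). Hence \(\partial\) is bounded below and has closed range.

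Let \(P\) be the orthogonal projection onto \(\left(\operatorname{ran}\partial\right)^{\perp}\). This subspace is \(\left(\rho\otimes1\right)\)-invariant because \(\operatorname{ran}\partial\) is, so \(P\) commutes with \(\rho\otimes1\). Then \(w:=PT\left(b\right)\) is a \(\left(\rho\otimes1\right)\left(G\right)\)-invariant vector in \(\mathcal{H}\). Since \(G\) is infinite, the regular representation, and hence any multiple of it, has no nonzero invariant vectors. Therefore \(w=0\), and \(T\left(b\right)=\partial v\) for some \(v\in\ell^{2}\left(G\right)\), using once more that the range is closed.

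Reading off coordinates gives \(b\left(g_{n}\right)=\lambda\left(g_{n}\right)v-v\) for all \(n\). This is the coboundary conclusion above, and it finishes the argument.
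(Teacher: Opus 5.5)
Your proof is correct, but it takes a different route from the paper's.

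\textbf{The paper's route.} It symmetrizes the finite set $F$ coming from the spectral gap, so that $T=\frac{1}{|F|}\sum_{g\in F}(\lambda(g)-\mathrm{Id})$ is self-adjoint and invertible on $\ell^{2}(G)$. Regarding $T$ as a finite difference operator on $\mathbb{R}^{G}$, it splits $\xi=\eta+\zeta$ with $\eta=T^{-1}(T\xi)\in\ell^{2}(G)$ and $T\zeta=0$. It then observes that $\rho(h)\zeta-\zeta\in\ell^{2}(G)\cap\ker T=\{0\}$, so $\zeta$ is right-invariant and hence constant.

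\textbf{Your route.} You never need invertibility. You use the spectral gap only in the weaker form that $\partial$ is bounded below, i.e.\ that the $\lambda$-coboundaries form a closed subspace. You replace the surjectivity of $T$ by two ingredients: the orthogonal projection onto $(\operatorname{ran}\partial)^{\perp}$, and the absence of nonzero $(\rho\otimes1)$-invariant vectors. Correspondingly, you finish with left-invariance of $\xi-v$ rather than right-invariance of $\zeta$.

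\textbf{What each buys.} The paper's argument is shorter and needs no weights or auxiliary Hilbert space. It also produces the $\ell^{2}$-part explicitly as $T^{-1}T\xi$. Yours is more structural: $[b]\mapsto PT(b)$ is a well-defined, injective, $\rho$-equivariant linear map from $H^{1}(G,\lambda)$ into $(\operatorname{ran}\partial)^{\perp}$, a subrepresentation of $\rho\otimes1$. So you in fact prove that every $\rho(G)$-fixed class in $H^{1}(G,\lambda)$ vanishes. Your proof also cleanly separates the two inputs: Hausdorffness of the cohomology, and absence of invariant vectors for the commuting representation.
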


We recall a type of Reiter/Kesten spectral gap reformulation of nonamenability.

\begin{lem}\label{lem:noname}
For every nonamenable group \(G\) there is a finite set \(F\subset G\), such that the operator
\[T:=\frac{1}{\left|F\right|}\sum\nolimits_{g\in F}\left(\lambda\left(g\right)-\mathrm{Id}\right)\]
is invertible on \(\ell^{2}\left(G\right)\).
\end{lem}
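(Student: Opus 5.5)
We prove \cref{lem:noname} by combining Kesten's criterion with a standard self-adjointness device. First choose a finite symmetric set \(F_{0}\subset G\) containing \(e_{G}\), and consider the Markov operator \(P:=\frac{1}{\left|F_{0}\right|}\sum_{g\in F_{0}}\lambda\left(g\right)\) on \(\ell^{2}\left(G\right)\). It is self-adjoint because \(F_{0}\) is symmetric, and \(\left\Vert P\right\Vert\leq1\).

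The key input is the Kesten/Reiter characterization of amenability in its following form. \(G\) is nonamenable if and only if there is a finite set \(F_{0}\) such that the function \(\mathbf{1}\) is not an approximate eigenvector, that is, \(\left\Vert P\right\Vert<1\).

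\begin{itemize}
\item Suppose \(\left\Vert P\right\Vert=1\) for every finite symmetric \(F_{0}\ni e_{G}\). Since \(P\geq-\mathrm{Id}\), and \(e_{G}\in F_{0}\) forces \(P+\mathrm{Id}\) to dominate \(\frac{2}{\left|F_{0}\right|}\mathrm{Id}\), the value \(1\) lies in the spectrum of \(P\). This gives unit vectors \(f\) with \(\left\langle Pf,f\right\rangle\to1\).
\item Replacing \(f\) by \(\left|f\right|\) only increases \(\left\langle Pf,f\right\rangle\), so we may take \(f\geq0\). Strict convexity then gives \(\left\Vert\lambda\left(g\right)f-f\right\Vert\to0\) for each \(g\in F_{0}\).
\item By a diagonal argument over an exhaustion of \(G\), this yields Reiter's condition, and hence amenability.
\end{itemize}

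Consequently, nonamenability provides \(F_{0}\) with \(\left\Vert P\right\Vert\leq1-\epsilon\).

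Now set \(F:=F_{0}\). Then
\[T=\frac{1}{\left|F\right|}\sum_{g\in F}\left(\lambda\left(g\right)-\mathrm{Id}\right)=P-\mathrm{Id}.\]
Since \(\left\Vert P\right\Vert<1\), the Neumann series \(-\sum_{k\geq0}P^{k}\) converges in operator norm and inverts \(T\). This proves the lemma.

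The main obstacle is justifying that nonamenability forces \(\left\Vert P\right\Vert<1\) for some finite \(F_{0}\), rather than only \(1\notin\sigma\left(P\right)\) or some weaker statement. I would cite Kesten's theorem directly, for example via \cite{kerrli2016ergodic}, or run the positivity argument above.

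Two technical points in that argument need care. Positivity of \(f\) is what makes \(\left\langle Pf,f\right\rangle\approx1\) imply almost invariance. Including \(e_{G}\) in \(F_{0}\) rules out the spectral value \(-1\), so that \(\left\Vert P\right\Vert=\sup\sigma\left(P\right)\) and the norm is controlled by the top of the spectrum.
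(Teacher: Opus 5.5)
Your proof is correct and follows essentially the paper's route: nonamenability rules out almost invariant vectors for \(\lambda\), which gives a spectral gap at \(1\) for the symmetric averaging operator and hence invertibility of \(T=P-\mathrm{Id}\); the paper cites this weak-containment fact directly, while you reprove the Hulanicki--Reiter direction by hand. The only difference is that the paper bounds just \(\sup\sigma(U)<1\), which already gives \(1\notin\sigma(U)\) and the invertibility of \(T\), so your \(e_{G}\in F_{0}\) device and the norm bound \(\Vert P\Vert<1\) (your stated ``main obstacle'') are not actually needed.
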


\begin{proof}[Proof of \cref{lem:noname}]
Since \(G\) is nonamenable, the trivial representation is not weakly contained in the left-regular representation \cite[\S G.3]{bekka2008kazhdan}. Therefore, there exist a finite set \(F\subset G\) and \(\epsilon>0\) such that
\[\max_{g\in F}\left\Vert\lambda\left(g\right)\xi-\xi\right\Vert_{\ell^{2}\left(G\right)}
\geq\epsilon\quad\text{whenever }\left\Vert\xi\right\Vert_{\ell^{2}\left(G\right)}=1.\]
Replacing \(F\) by \(F\cup F^{-1}\), we may assume that \(F\) is symmetric. Set
\[U:=\frac{1}{\left|F\right|}\sum\nolimits_{g\in F}\lambda\left(g\right),\text{ so that }T=U-\mathrm{Id}.\]
Since \(F\) is symmetric \(U\) is self-adjoint. For every unit vector \(\xi\in\ell^{2}\left(G\right)\) we have
\[2-2\left\langle U\xi,\xi\right\rangle=\frac{1}{\left|F\right|}\sum\nolimits_{g\in F}\left\Vert\lambda\left(g\right)\xi-\xi\right\Vert_{\ell^{2}\left(G\right)}^{2}\geq\frac{\epsilon^{2}}{\left|F\right|},\quad\text{hence}\quad\left\langle U\xi,\xi\right\rangle\leq1-\frac{\epsilon^{2}}{2\left|F\right|}.\]
Letting \(\alpha:=1-\frac{\epsilon^{2}}{2\left|F\right|}<1\), we get that \(\sup\sigma\left(U\right)\leq\alpha<1\) hence \(\sup\sigma\left(T\right)=\sup\sigma\left(U\right)-1\leq\alpha-1<0\). Thus \(0\notin\sigma\left(T\right)\) so that \(T\) is invertible on \(\ell^{2}\left(G\right)\).
\end{proof}

\begin{proof}[Proof of \cref{thm:nonamenable}]
Consider the invertible operator \(T\) produced in \cref{lem:noname}. Then
\[T\xi=\frac{1}{\left|F\right|}\sum\nolimits_{g\in F}\left(\lambda\left(g\right)\xi-\xi\right)\in\ell^{2}\left(G\right).\]
Using that \(T\) is invertible, define the vectors
\[\eta:=T^{-1}\left(T\xi\right)\in\ell^{2}\left(G\right)\quad\text{and}\quad \zeta:=\xi-\eta,\]
and by construction \(T\zeta=0\). Now fix an arbitrary \(h\in G\), and we have
\[\rho\left(h\right)\zeta-\zeta=\left(\rho\left(h\right)\xi-\xi\right)-\left(\rho\left(h\right)\eta-\eta\right)\in\ell^{2}\left(G\right).\]
Since \(\lambda\) and \(\rho\) commute, the operator \(T\) commutes with \(\rho\left(h\right)\), hence
\[T\left(\rho\left(h\right)\zeta-\zeta\right)=\rho\left(h\right)T\zeta-T\zeta=0.\]
As \(T\) is injective on \(\ell^{2}\left(G\right)\), this implies that \(\rho\left(h\right)\zeta=\zeta\). Since \(h\) is arbitrary, \(\zeta\) is a constant vector, so let \(p\in\mathbb{R}\) be such that \(\zeta\left(h\right)=p\) for all \(h\in G\). Then \(\left\Vert\xi-p\right\Vert_{\ell^{2}\left(G\right)}=\left\Vert\xi-\zeta\right\Vert_{\ell^{2}\left(G\right)}=\left\Vert\eta\right\Vert_{\ell^{2}\left(G\right)}<+\infty\).
\end{proof}

\section*{Acknowledgments}

The authors are grateful to Stefaan Vaes for sharing his unpublished result, and for allowing us to include it in Appendix~\ref{app:vaes}. While preparing the current version of the paper, we were deeply saddened to learn of the passing of Michael (Misha) Kapovich who suggested in \cite{MOthread} the first example of an asymmetric \(\ell^{2}\)-cocycle on an amenable group.

\bibliographystyle{acm}
\bibliography{References}

\end{document}